\date{October 12, 2025}
\newcommand{\adj}{{precisely }} 
\numberwithin{equation}{section} 
\newcommand{\la}{\langle}	   	
\newcommand{\ra}{\rangle}          	
\newcommand{\bi}[1]{{\boldmath\textbf{\itshape{#1}}}} 
\newcommand{\R}{{\mathbb R}}		
\newcommand{\e}{\varepsilon}            
\newcommand{\cn}{\colon}                
\newcommand{\dist}{\mathop{\rm dist}}
\newcommand{\Z}{{\mathbb Z}}
\newtheorem{thm}{Theorem}[section]
\newtheorem{lemma}[thm]{Lemma}
\newtheorem*{introthm}{Main Theorem}
\newtheorem{prop}[thm]{Proposition}
\newtheorem{conjecture}[thm]{Conjecture}
\theoremstyle{definition}
\newtheorem{defn}[thm]{Definition}
\theoremstyle{remark}
\newcommand{\f}{\partial}
\author{Maria Girardi}
\address{ 
  Department of Mathematics, University of South Carolina, Columbia, SC 29208}
\email{girardi@math.sc.edu}
\author{Ralph Howard}
\address{  
Department of Mathematics, University of South Carolina, Columbia, SC 29208}
\email{howard@math.sc.edu}
\title[Nowhere differentiable multivariate functions]{Continuous nowhere 
  differentiable multivariate functions}
\subjclass[2020]{Primary: 26B05, Secondary: 46E15, 26A16}
\keywords{Nowhere differentiable,
  functions of several variables, H\"older continuity.}
\definecolor{FuchsiaCode}{RGB}{255,0,255}
\definecolor{PurplePME}{RGB}{140,0,255}
\newcounter{mnotecount}  
\renewcommand{\themnotecount}{\arabic{mnotecount}} 
\newcommand{\mnote}[1]
{\protect{\stepcounter{mnotecount}}$^{\mbox{\footnotesize  $
      \bullet$\themnotecount}}$\marginpar{\parbox[b]{1.2in}{\raggedright\tiny\em
	 \themnotecount:\! #1}} }
\newcommand{\fnb}[3]
{\protect{\stepcounter{mnotecount}}  
   \marginpar{\mbox{{\Small $\diamond$\themnotecount\,$\downarrow$}~{\tiny #1}}} 
   \hskip -12 pt \footnote[\value{mnotecount}]{#2 #3}\addtocounter{footnote}{0}}
\begin{document}

\begin{abstract}
Let $U$ be an open set in $\R^d$.  A
continuous function $f\cn U \to \R $
is \bi{strongly nowhere differentiable}
 if and only if 
for each   $\gamma\in(0,1]$ and
for each unit speed $C^{1,\gamma}$ curve 
$c\cn [a,b] \to U$, 
the composition $f\circ c \cn [a,b] \to \R$
is nowhere differentiable on  $(a,b)$.
For bounded $U$, let
$\overline U$ be the closure of $U$ and
$C(\overline U)$ be the Banach space of continuous
real-valued functions on $\overline U$
with the sup 
norm.
\textbf{Theorem.}  \emph{In the sense of the  Baire category 
theorem, almost every $f\in C(\overline U)$ is strongly 
nowhere differentiable on $U$.}
\end{abstract}

\maketitle

\section{Introduction}
\label{sec:intro}

The existence of continuous nowhere differentiable functions of a single  
variable has been known since the 19th century.  The earliest proposed example
is due to Riemann who claimed the function
$R(x) = \sum_{n=1}^\infty n^{-2}\sin(\pi n^2 x)$
was nowhere differentiable. In 1916,
G.~ H.\ Hardy proved the  function $R$ was non-differentiable at
all irrational points and all rational points of the form $(2p+1)/(2q)$
or $2p/(4q+1)$ with $p$ and~$q$ integers~\cite{Hardy:Reim_fcn}.  In 1970,
Gerver \cite{Gerver70} proved $R'(x) = -\pi/2$ for rational points of the form
$x = (2p+1)/(2q+1)$.   So Riemann's example is  only almost everywhere
non-differentiable.    In 1872,  Weierstrass introduced his famous example
$W(x) =\sum_{n=1}^\infty a^n \cos(b^n \pi x) $ and proved  the
function $W$ is nowhere differentiable when  $b$ is an odd integer, 
$a\in (0,1)$, and $ab>1 + 3\pi/2$.

By the 1930s it was realized that not only do continuous nowhere
differentiable functions exist, but also they are in some sense typical:
Banach \cite{Banach31} and  Mazurkiewicz \cite{Mazurkiewicz}
showed that in $C([0,1])$,
the metric space of continuous real valued functions
with the supremum norm, the set of nowhere differentiable functions
contains a dense $G_\delta$ set.
Therefore, in the sense of the Baire category theorem,
almost every continuous function is nowhere differentiable.

For a nice discussion of Baire's theorem and
its application to ideas related to nowhere
differentiable functions see the beautiful
monograph of Oxtoby \cite{Oxtoby:book}.
Jarnicki and Plug's  comprehensive book
\cite{Jarnicki_Pflug} on nowhere differentiable functions
has ample constructions along with  history.

In this note we consider analogous questions for multivariate functions.
Let $U \subseteq \R^d $ be open and $f\cn U \to \R$.  The standard
definition of $f$ being differentiable at $x_0\in U$ is that there is
a linear map $f'(x_0)\cn \R^d \to \R$ such that 
$$
\| f(x) - f(x_0)- f'(x_0)(x-x_0)\| = \|x-x_0\|\epsilon(x;x_0)
$$
where $\lim_{x\to x_0}\epsilon(x;x_0)=0$.
Let
$g_j\cn \R \to \R$ be a continuous
nowhere differentiable function of one variable.
Then the function
$$
f(x_1,x_2 ,\ldots, x_d) = g_1(x_1) + g_2(x_2) + \cdots + g_d(x_d)
$$
is nowhere differentiable; yet, 
its restriction to a lower dimensional submanifold can be everywhere  differentiable.
For example if  $d=2$ and  $g_2=-g_1$  then  $f(x,y) = g_1(x)-g_1(y)$
is nowhere differentiable on the plane,
but $f(x,x)\equiv 0$ and so $f$ restricted to
the line $y=x$ is $C^\infty$.

Recall $g:I\to\R^d$,
where $I$ is an interval in $\R$,
is $\alpha$-H\"older continuous (in short, is $C^{0,\alpha}$)
where $\alpha>0$
if and only if
there is a constant $\rho$ so that
$\| g(x_2) - g(x_1)\| \leq \rho |x_2 -x_1|^\alpha$ for all $x_1, x_2 \in I$.
Also, $g\in C^{1,\alpha}$
if and only if
its first derivative exists and 
$g'\in C^{0,\alpha}$.

\begin{defn}\label{def:test_curve}
A \bi{test curve} in $\R^d$ is a $C^1$ function $c\cn [a,b] \to \R^d$,
defined on an  interval $[a,b]$,
satisfying
$c$ has unit speed 
(i.e.,  $\|c'(s)\| = 1$ for each $s\in[a,b]$)
and $c$  is $C^{1,\gamma}$ for some $\gamma\in(0,1]$
(i.e., there is a constant~$\rho$ so that 
  $\|c'(s_2) - c'(s_1)\|\le \rho |s_2-s_1|^\gamma$
  for all $s_1, s_2\in[a,b]$).
\end{defn}

\begin{defn}
Let $f\cn U \to \R$ where $U$ is an open set in $\R^d$.  Then
$f$ is \bi{strongly nowhere differentiable} if and only if
for every test curve $c\cn [a,b] \to U$ the composition
$f\circ c$ 
is nowhere differentiable on~$(a,b)$.
\end{defn}

Let $f$ be strongly nowhere differentiable on $U$ and $x\in U$.
Then $f$ has no directional derivative at $x$.
Indeed,  let   $v\in  \R^d$ with $\| v\| = 1$.
The line segment parameterized by
$c(s)= x+sv$ for $s\in  [-\delta, \delta]$ with $\delta$
small is  a $U$-valued test curve 
so $f\circ c$ has no derivative at~$0$.

For a bounded 
set $U$ in $\R^d$ with closure $\overline U$, 
let $C(\overline U)$ be the
Banach space of all continuous functions
$f\cn \overline U\to \R$ 
with the supremum norm.

\begin{introthm}
  Let $U$ be a bounded open set in $\R^d$ with closure $\overline U$
  and let $C(\overline{U})$ be the Banach space of
  continuous functions $f\cn \overline U\to \R$   
  endowed with the supremum norm.
    The set of functions in $C(\overline U)$
    that are strongly
    nowhere differentiable on $U$  contains a 
    $G_\delta$ subset that is dense in  $C(\overline U)$.
\end{introthm}

More informally, in the sense of the Baire Category Theorem, for
almost every $f\in C(\overline U)$ the restriction $f\big|_U$ is strongly
nowhere differentiable.
The Main Theorem is proved in Section \ref{section:PfMainThm}.

\section{Construction of  auxiliary functions}     
\label{section:main_example}

\begin{defn}
  A function $f\cn \R \to \R$ is \bi{\adj $C^{0,\alpha}$} where
  $\alpha>0$
  if and only if 
  there is a constant $C>0$ so that 
  $|f(x)-f(y)| \le C|x-y|^\alpha$ for all $x,y\in \R$,
  and for all $\beta>\alpha$ and $y\in \R$
    $$
    \limsup_{x\to y} \frac{|f(x)- f(y)|}{|x-y|^\beta} = \infty.
    $$
\end{defn}

\begin{defn}
  Let $I$ be an interval of $\R$ and $x_0\in I$. 
  A function  $f \cn I \to \R$
  is \bi{\adj $C^{0,\alpha}$ at $x_0$} where $\alpha>0$
  if and only if 
        $$
        \limsup_{x\to x_0} \frac{|f(x) -f(x_0)|}{|x-x_0|^\alpha} < \infty
        $$
        and for any $\beta > \alpha$  
        \begin{equation} \label{eq:b-infty} 
        \limsup_{x\to x_0}  \frac{|f(x) -f(x_0)|}{|x-x_0|^\beta} = \infty.
    \end{equation} 
\end{defn}

\begin{thm}\label{thm:strict_fcn}
For each $\alpha$ with $0<\alpha< 1$ there
is a \adj $C^{0,\alpha}$ function $f\cn  \R \to \R$,
which is also  even,  $2$-periodic,  and thus   bounded.  
\end{thm}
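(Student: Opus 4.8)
The plan is to write down an explicit Weierstrass--Takagi type lacunary series and verify the two halves of the definition directly. Fix $\alpha\in(0,1)$, let $\phi(t)=\dist(t,\Z)$ be the distance from $t$ to the nearest integer (a $1$-periodic, even, $1$-Lipschitz function with $0\le\phi\le\tfrac12$), and choose an integer $b\ge 2$ to be pinned down below in terms of $\alpha$. Set
$$
 f(x)\ =\ \sum_{n=0}^{\infty} b^{-n\alpha}\,\phi(b^n x).
$$
Since $\sum_n b^{-n\alpha}<\infty$, the series converges uniformly, so $f$ is continuous; each summand is even and $1$-periodic, so $f$ is even and $1$-periodic, in particular $2$-periodic and bounded. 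It remains to check that $f$ is \adj $C^{0,\alpha}$.

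For the global H\"older bound, fix $x\ne y$. If $|x-y|\ge 1$ then $|f(x)-f(y)|\le 2\|f\|_\infty\le 2\|f\|_\infty|x-y|^\alpha$, so assume $h:=|x-y|\le 1$ and let $N\ge 0$ be the integer with $b^{-N-1}<h\le b^{-N}$. Split the series at $n=N$. For $n\le N$ use $|\phi(b^nx)-\phi(b^ny)|\le b^n h$ together with the geometric sum $\sum_{n\le N}b^{n(1-\alpha)}\le\frac{b^{(N+1)(1-\alpha)}}{b^{1-\alpha}-1}$, so this part is $\lesssim h\,b^{N(1-\alpha)}\lesssim h^\alpha$. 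For $n>N$ use $|\phi(b^nx)-\phi(b^ny)|\le\tfrac12$ and $\sum_{n>N}b^{-n\alpha}=\frac{b^{-N\alpha}}{b^\alpha-1}\lesssim h^\alpha$. Adding, $|f(x)-f(y)|\le C_\alpha h^\alpha$.

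The substantive step is the pointwise sharpness. Fix $y\in\R$ and a large integer $N$ and look inside the window $[y,\,y+b^{-N}]$: as $x$ ranges over it, $b^N x$ ranges over the closed interval $[b^Ny,\,b^Ny+1]$ of length $1$, which necessarily contains both an integer and a half-integer. Hence there are $x_1,x_2\in[y,\,y+b^{-N}]$ with $\phi(b^Nx_1)=0$ and $\phi(b^Nx_2)=\tfrac12$, so the $n=N$ term of the series changes by exactly $\tfrac12 b^{-N\alpha}$ from $x_1$ to $x_2$. In the difference $f(x_1)-f(x_2)$ the remaining terms are controlled exactly as above: since $|x_1-x_2|\le b^{-N}$, the head $n<N$ contributes at most $b^{-N}\sum_{n<N}b^{n(1-\alpha)}\le\frac{1}{b^{1-\alpha}-1}\,b^{-N\alpha}$, and the tail $n>N$ contributes at most $\tfrac12\sum_{n>N}b^{-n\alpha}=\frac{1}{2(b^\alpha-1)}\,b^{-N\alpha}$. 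Therefore
$$
 |f(x_1)-f(x_2)|\ \ge\ b^{-N\alpha}\Bigl(\tfrac12-\tfrac{1}{b^{1-\alpha}-1}-\tfrac{1}{2(b^\alpha-1)}\Bigr).
$$
Now fix $b=b(\alpha)$ large enough that the parenthesis equals a constant $\kappa>0$; this is possible because, $\alpha\in(0,1)$ being fixed, both $1/(b^{1-\alpha}-1)$ and $1/(b^\alpha-1)$ tend to $0$ as $b\to\infty$. With this $b$, at least one of $x_1,x_2$ — call it $x_N$ — satisfies $|f(x_N)-f(y)|\ge\tfrac{\kappa}{2}b^{-N\alpha}>0$, and in particular $0<|x_N-y|\le b^{-N}$. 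Hence, for every $\beta>\alpha$,
$$
 \frac{|f(x_N)-f(y)|}{|x_N-y|^{\beta}}\ \ge\ \frac{\kappa}{2}\,b^{\,N(\beta-\alpha)}\ \xrightarrow[\ N\to\infty\ ]{}\ \infty,
$$
and since $x_N\to y$ this gives $\limsup_{x\to y}|f(x)-f(y)|/|x-y|^{\beta}=\infty$. Combined with the H\"older bound, $f$ is \adj $C^{0,\alpha}$, and it is even, $2$-periodic and bounded as noted.

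I expect the only real obstacle to be this pointwise lower bound: were the lacunarity $b$ not allowed to depend on $\alpha$, the error terms $1/(b^{1-\alpha}-1)$ (when $\alpha$ is near $1$) or $1/(b^\alpha-1)$ (when $\alpha$ is near $0$) could swamp the resonant term $\tfrac12 b^{-N\alpha}$ and the argument would collapse; choosing $b$ large once $\alpha$ is fixed — harmless for an existence statement — is precisely what renders those errors negligible. Everything else (uniform convergence, evenness, periodicity, the upper estimate, and the extraction of a large difference quotient) is routine geometric-series bookkeeping. One may replace the sawtooth $\phi$ by $\cos(\pi t)$ throughout, recovering the classical Weierstrass function $\sum_n b^{-n\alpha}\cos(\pi b^n x)$ at the cost of a few factors of $\pi$ in the constants.
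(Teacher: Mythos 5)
Your construction and verification are correct, and the overall blueprint is the same as the paper's (Appendix, Theorem \ref{thm:Kat}): a lacunary sawtooth series $\sum_n b^{-n\alpha}\phi(b^n x)$, with the upper H\"older bound obtained by the standard split of the series at the scale of $|x-y|$. The interesting divergence is in the pointwise lower bound \eqref{eq:b-infty}. The paper takes $b$ to be an \emph{even} integer and evaluates differences only at grid points $b^{-m}j$, $b^{-m}(j+1)$: there the tail terms $k>m$ vanish \emph{exactly}, because $b^{k-m}$ is an even integer and $\phi$ has period $2$, so the only error to control is the head, which is why the single condition $b^{1-\alpha}>2$ suffices; the bound is then transferred from grid differences to an arbitrary point $x$ via Lemma \ref{lem:a<x<b} and a proof by contradiction. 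You instead locate, in every window $[y,y+b^{-N}]$, resonant points $x_1,x_2$ where the $N$-th summand attains its extreme values $0$ and $\tfrac12$, bound the tail crudely by $\sup\phi$ per term, and absorb the resulting extra error $\tfrac{1}{2(b^\alpha-1)}$ by letting $b=b(\alpha)$ be large. What your route buys is a direct argument (no transfer lemma, no contradiction, no parity condition on $b$, and it exhibits an explicit sequence $x_N\to y$ realizing the blow-up of the difference quotient); what it costs is a lacunarity parameter that must be large when $\alpha$ is near $0$ as well as near $1$, whereas the paper's exact tail cancellation keeps the admissible $b$'s to the cleaner condition $b^{1-\alpha}>2$. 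Both arguments are complete; your observation that one of $x_1,x_2$ must be distinct from $y$ (since the corresponding function value differs from $f(y)$) correctly handles the degenerate case.
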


\begin{proof}
See the appendix.
\end{proof}

\begin{prop}\label{prop:not_diff}
    If $f\cn [a,b]\to \R$ is \adj  $C^{0,\alpha}$ at $x_0\in [a,b]$,
    with $0<\alpha<1$, then $f$ is not differentiable at $x_0$.
\end{prop}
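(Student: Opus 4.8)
The plan is to argue by contradiction, reducing everything to the definition of differentiability and property \eqref{eq:b-infty}. Suppose $f$ is differentiable at $x_0$. Then there is a real number $m=f'(x_0)$ and a function $\epsilon(x)$ with $\epsilon(x)\to 0$ as $x\to x_0$ such that
$$
f(x) = f(x_0) + m(x-x_0) + |x-x_0|\,\epsilon(x).
$$
Choosing $\delta>0$ small enough that $|\epsilon(x)|\le 1$ for $x\in[a,b]$ with $|x-x_0|<\delta$, one gets the linear bound
$$
|f(x)-f(x_0)| \le \bigl(|m|+1\bigr)\,|x-x_0| \qquad\text{whenever } |x-x_0|<\delta.
$$

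Next I would use the hypothesis $\alpha<1$ to pick an exponent $\beta$ with $\alpha<\beta<1$. Dividing the displayed inequality by $|x-x_0|^\beta$ yields, for $0<|x-x_0|<\delta$,
$$
\frac{|f(x)-f(x_0)|}{|x-x_0|^\beta} \le \bigl(|m|+1\bigr)\,|x-x_0|^{1-\beta}.
$$
Since $1-\beta>0$, the right-hand side tends to $0$ as $x\to x_0$, so
$$
\limsup_{x\to x_0}\frac{|f(x)-f(x_0)|}{|x-x_0|^\beta}=0.
$$
But $\beta>\alpha$, so \eqref{eq:b-infty} in the definition of ``\adj $C^{0,\alpha}$ at $x_0$'' forces this $\limsup$ to equal $\infty$, a contradiction. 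Hence $f$ is not differentiable at $x_0$. (If $x_0$ is an endpoint of $[a,b]$, the same computation applies verbatim with the one-sided limit, which is already what $\limsup_{x\to x_0}$ denotes there.)

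I do not expect any genuine obstacle: the only point requiring care is the bookkeeping of exponents, namely that the restriction $\alpha<1$ is exactly what makes the interval $(\alpha,1)$ nonempty, so that an exponent $\beta$ eligible for \eqref{eq:b-infty} can simultaneously satisfy $|x-x_0|^{1-\beta}\to 0$. If $\alpha=1$ were allowed this mechanism would break down, consistent with the fact that a Lipschitz (hence $C^{0,1}$) function can certainly be differentiable. Note also that the upper bound $\limsup_{x\to x_0}|f(x)-f(x_0)|/|x-x_0|^\alpha<\infty$ from the definition is not needed; only \eqref{eq:b-infty} is used.
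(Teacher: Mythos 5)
Your proof is correct and rests on the same mechanism as the paper's: property \eqref{eq:b-infty} with an exponent $\beta>\alpha$ is incompatible with the linear bound that differentiability provides. The paper gets there marginally more directly by taking $\beta=1$, so that \eqref{eq:b-infty} says the difference quotient itself has $\limsup$ equal to $\infty$; your detour through an intermediate $\beta\in(\alpha,1)$ is a harmless variation.
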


\begin{proof}
This is a direct consequence of the definition of  \adj $C^{0,\alpha}$
at $x_0$.
As $\alpha<1$, taking $\beta= 1$ gives
$$
\limsup_{x\to x_0} \left| \frac{f(x)-f(x_0)}{x-x_0} \right|
= \limsup_{x\to x_0} \frac{|f(x)-f(x_0)|}{|x-x_0|^1}=\infty.
$$
Thus $f$ is not differentiable at $x_0$.
\end{proof}

\begin{lemma}\label{lem:sum}
If $\alpha_1, \alpha_2 ,\ldots, \alpha_n \in (0,1]$ are distinct 
and $f_j$ is \adj $C^{0,\alpha_j}$ at $x_0$, then the
sum $f=f_1 + f_2 + \cdots + f_n$ is \adj $C^{0,\alpha}$ at $x_0$
where $\alpha = \min_{1\le j \le n} 
\alpha_j$.
\end{lemma}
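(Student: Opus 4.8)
The plan is to prove Lemma~\ref{lem:sum} by carefully separating the two requirements in the definition of \adj $C^{0,\alpha}$ at $x_0$: the upper H\"older bound (a $\limsup$ with exponent $\alpha$ that is finite) and the lower oscillation condition \eqref{eq:b-infty} (a $\limsup$ with exponent $\beta$ that is infinite for every $\beta>\alpha$). The upper bound for the sum is the easy half: if each $f_j$ satisfies $\limsup_{x\to x_0}|f_j(x)-f_j(x_0)|/|x-x_0|^{\alpha_j}<\infty$ and $\alpha=\min_j\alpha_j$, then near $x_0$ we have $|x-x_0|^{\alpha_j}\le |x-x_0|^{\alpha}$ (for $|x-x_0|\le 1$, since $\alpha_j\ge\alpha$), so each summand is $O(|x-x_0|^\alpha)$ and hence so is $f$. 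Thus $\limsup_{x\to x_0}|f(x)-f(x_0)|/|x-x_0|^\alpha<\infty$.

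The harder half is showing that the lower oscillation condition survives summation, i.e.\ for every $\beta>\alpha=\min_j\alpha_j$ we have $\limsup_{x\to x_0}|f(x)-f(x_0)|/|x-x_0|^\beta=\infty$. Fix such a $\beta$. Let $j_0$ be an index achieving the minimum, so $\alpha_{j_0}=\alpha<\beta$; by hypothesis $f_{j_0}$ has this blow-up along some sequence $x_k\to x_0$. The obstacle is that the other summands $f_i$ with $i\ne j_0$ might, \emph{a priori}, cancel the large values of $f_{j_0}$ along that particular sequence. The remedy is to also use the upper H\"older control on the $f_i$: choose an exponent $\beta'$ with $\alpha<\beta'<\beta$ and with $\beta'$ strictly less than every $\alpha_i$ for $i\ne j_0$ (possible because the $\alpha_i$ with $i \ne j_0$ are all $>\alpha$ — the minimum is attained only where $\alpha_j=\alpha$; if several indices attain the minimum, group all of them into a single ``distinguished part'' first, or note that if $\alpha_{i}=\alpha$ for $i\ne j_0$ the distinctness hypothesis forbids it, so in fact $j_0$ is unique and every other $\alpha_i>\alpha$). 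For $i\ne j_0$ and $x$ near $x_0$, $|f_i(x)-f_i(x_0)|\le M_i|x-x_0|^{\alpha_i}\le M_i|x-x_0|^{\beta'}=o(|x-x_0|^{\alpha})$ is too generous; instead compare against the blow-up rate directly: since $f_{j_0}$ is \adj $C^{0,\alpha}$ at $x_0$ we may pick, for the exponent $\beta''$ with $\alpha<\beta''<\min(\beta',\beta)$, a sequence $x_k\to x_0$ with $|f_{j_0}(x_k)-f_{j_0}(x_0)|/|x_k-x_0|^{\beta''}\to\infty$, while $\sum_{i\ne j_0}|f_i(x_k)-f_i(x_0)|\le C|x_k-x_0|^{\mu}$ with $\mu=\min_{i\ne j_0}\alpha_i>\alpha$, and we arrange $\beta''<\mu$ as well. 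Then
\[
\frac{|f(x_k)-f(x_0)|}{|x_k-x_0|^{\beta''}}\ \ge\ \frac{|f_{j_0}(x_k)-f_{j_0}(x_0)|}{|x_k-x_0|^{\beta''}}-\frac{\sum_{i\ne j_0}|f_i(x_k)-f_i(x_0)|}{|x_k-x_0|^{\beta''}}\ \ge\ \frac{|f_{j_0}(x_k)-f_{j_0}(x_0)|}{|x_k-x_0|^{\beta''}}-C|x_k-x_0|^{\mu-\beta''}\ \to\ \infty,
\]
since the first term tends to $\infty$ and the second stays bounded (indeed tends to $0$). Hence $\limsup_{x\to x_0}|f(x)-f(x_0)|/|x-x_0|^{\beta''}=\infty$, and since $\beta>\beta''$ and $|x-x_0|^{\beta''}\le|x-x_0|^{\beta}$ near $x_0$ forces an even larger quotient, we get $\limsup_{x\to x_0}|f(x)-f(x_0)|/|x-x_0|^{\beta}=\infty$ as well.

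The main obstacle, then, is purely the possible cancellation among the summands along the extremal sequence for $f_{j_0}$, and the way around it is to exploit that the \emph{non}-minimal summands have a strictly better H\"older exponent, so their total oscillation is negligible at the scale $|x-x_0|^{\beta''}$ for a suitably chosen intermediate exponent $\beta''\in(\alpha,\mu)$; the distinctness of the $\alpha_j$ is exactly what guarantees $\mu>\alpha$, so such a $\beta''$ exists. The remaining bookkeeping — choosing the intermediate exponents, restricting to $|x-x_0|\le 1$ so that monotonicity of $t\mapsto t^s$ in the exponent can be used, and handling finitely many summands — is routine. It may be cleanest to present the argument for $n=2$ first and then induct, using that the class of functions \adj $C^{0,\alpha}$ at $x_0$ with the two quantified conditions behaves well under adding one more summand of a strictly larger (hence, after relabeling, new minimum-or-not) exponent.
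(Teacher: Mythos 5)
Your proof is correct and follows essentially the same route as the paper: the upper bound is the easy half, and the lower bound is obtained by isolating the summand with the minimal exponent, choosing an intermediate exponent $\beta''$ strictly below the next-smallest $\alpha_i$ so that the remaining summands contribute $o(1)$ after the reverse triangle inequality, and then using monotonicity of $t\mapsto t^{\beta}$ near $0$ to cover all $\beta>\alpha$. The only (cosmetic) difference is that you treat all $n$ summands at once via the unique minimizer, whereas the paper reduces to $n=2$ by induction.
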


\begin{proof}
  By induction it is enough to show  Lemma~\ref{lem:sum} for $n=2$.
  Without loss of generality, assume $\alpha_1< \alpha_2$.  
Then
\begin{equation}\label{eq:upper}
\frac{|f(x) - f(x_0)|}{|x-x_0|^{\alpha_1}} 
\le \frac{|f_1(x) - f_1(x_0)|}{|x-x_0|^{\alpha_1}} 
    +  \frac{|f_2(x) - f_2(x_0)|}{|x-x_0|^{\alpha_2}} |x-x_0|^{\alpha_2 - \alpha_1}.
\end{equation}
The function $f_2$ is \adj $C^{0,\alpha_2}$ at $x_0$ and therefore
$$
\limsup_{x\to x_0} \frac{|f_2(x) - f_2(x_0)|}{|x-x_0|^{\alpha_2}} < \infty.
$$
Also $\lim_{x\to x_0} |x-x_0|^{\alpha_2 - \alpha_1} =0$ as $\alpha_1<\alpha_2$.
Thus
\begin{equation}\label{eq:lim=0}
    \lim_{x\to x_0} \frac{|f_2(x) - f_2(x_0)|}{|x-x_0|^{\alpha_2}} |x-x_0|^{\alpha_2 - \alpha_1}=0.
\end{equation}
Using this and that $f_1$ is \adj $C^{0,\alpha_1}$ at $x_0$,
inequality \eqref{eq:upper} gives
\begin{align*}
    \limsup_{x\to x_0} \frac{|f(x) - f(x_0)|}{|x-x_0|^{\alpha_1}} 
    &\le\limsup_{x\to x_0}  \frac{|f_1(x) - f_1(x_0)|}{|x-x_0|^{\alpha_1}}  + 0
    <  \infty .
\end{align*}

Let $\beta\in (\alpha_1,\alpha_2)$. 
By the reverse triangle inequality
\begin{equation}
  \label{eq:sumbelow}
\frac{|f(x) - f(x_0)|}{|x-x_0|^{\beta}} \ge
 \frac{|f_1(x) - f_1(x_0)|}{|x-x_0|^{\beta}} 
    -  \frac{|f_2(x) - f_2(x_0)|}{|x-x_0|^{\alpha_2}} |x-x_0|^{\alpha_2-\beta}.
\end{equation}
Since $\beta<\alpha_2$, the argument  showing \eqref{eq:lim=0} holds  gives
$$
\lim_{x\to x_0} \frac{|f_2(x) - f_2(x_0)|}{|x-x_0|^{\alpha_2}} |x-x_0|^{\alpha_2-\beta}=0.
$$
Thus since $\alpha_1<\beta$ and  $f_1$ is \adj $C^{0,\alpha_1}$ at $x_0$,
by  \eqref{eq:sumbelow} 
$$
\limsup_{x\to x_0} \frac{|f(x) - f(x_0)|}{|x-x_0|^{\beta}} 
 \ge \limsup_{x\to x_0}  \frac{|f_1(x) - f_1(x_0)|}{|x-x_0|^{\beta}} - 0 = \infty.
 $$
 Note for  any $\e>0$
 $$
 \limsup_{x\to x_0} \frac{|f(x) - f(x_0)|}{|x-x_0|^{\beta + \e}} \geq
 \limsup_{x\to x_0} \frac{|f(x) - f(x_0)|}{|x-x_0|^{\beta}} = \infty.
 $$
Thus $f$ is \adj $C^{0,\alpha_1}$ at $x_0$.
\end{proof}

\begin{lemma}\label{lem:fou}
  Let $u\cn J \to I$ be a $C^1$ function 
  for intervals  $I$ and $J$ of~$\R$.
Let $f\cn I \to \R$ is \adj $C^{0,\alpha}$ at $u(x_0)$
where $x_0$ is in the interior of~$J$.
\begin{enumerate}
\item[(a)] 
  If $u'(x_0)\ne 0$ then  $f\circ u$
  is \adj $C^{0,\alpha}$ at $x_0$.
\end{enumerate}
Furthermore let the curve  $u$ be $C^{1,\gamma}$ for some $\gamma\in(0,1]$
and   $\alpha > \frac{1}{1+\gamma}$.
\begin{enumerate}
\item[(b)]
  If $u'(x_0) =0$ then $f\circ u$
  is differentiable at $x_0$ and $(f\circ u)'(x_0) =0$.
\end{enumerate}
\end{lemma}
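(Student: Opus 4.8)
The plan is to reduce both statements to the behavior of $f$ near $u(x_0)$ by controlling $|u(x)-u(x_0)|$ in terms of $|x-x_0|$. For part (a), since $u\in C^1$ with $u'(x_0)\ne 0$, continuity of $u'$ furnishes $\delta>0$ and constants $0<m\le M$ so that $|u'(s)|\in[m,M]$ for $|s-x_0|\le\delta$; the mean value theorem then gives $m|x-x_0|\le|u(x)-u(x_0)|\le M|x-x_0|$ on this interval, and there $u'$ has constant sign so $u$ is strictly monotone, hence injective, and $u(x)\ne u(x_0)$ for $0<|x-x_0|\le\delta$. For the upper estimate I would write
\[
\frac{|f(u(x))-f(u(x_0))|}{|x-x_0|^{\alpha}}
 =\frac{|f(u(x))-f(u(x_0))|}{|u(x)-u(x_0)|^{\alpha}}\left(\frac{|u(x)-u(x_0)|}{|x-x_0|}\right)^{\alpha}
 \le M^{\alpha}\,\frac{|f(u(x))-f(u(x_0))|}{|u(x)-u(x_0)|^{\alpha}},
\]
and, since $u(x)\to u(x_0)$ as $x\to x_0$ with $u(x)\ne u(x_0)$, letting $x\to x_0$ and invoking that $f$ is \adj $C^{0,\alpha}$ at $u(x_0)$ bounds the $\limsup$ of the left side. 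For the blow-up, fix $\beta>\alpha$; the local inverse $u^{-1}$ is Lipschitz with constant $1/m$, so substituting $y=u(x)$ gives
\[
\frac{|f(u(x))-f(u(x_0))|}{|x-x_0|^{\beta}}
 \ge m^{\beta}\,\frac{|f(y)-f(u(x_0))|}{|y-u(x_0)|^{\beta}},
\]
and as $x\to x_0$ (equivalently $y\to u(x_0)$) the right side has $\limsup=\infty$ by \eqref{eq:b-infty} applied to $f$ at $u(x_0)$. Hence $f\circ u$ is \adj $C^{0,\alpha}$ at $x_0$.

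For part (b), from $\limsup_{y\to u(x_0)}|f(y)-f(u(x_0))|/|y-u(x_0)|^{\alpha}<\infty$ there are $C>0$ and $\eta>0$ with $|f(y)-f(u(x_0))|\le C|y-u(x_0)|^{\alpha}$ whenever $|y-u(x_0)|<\eta$, so $|f(u(x))-f(u(x_0))|\le C|u(x)-u(x_0)|^{\alpha}$ for $x$ near $x_0$. Since $u'(x_0)=0$ and $u'$ is $C^{0,\gamma}$ with some constant $\rho$, we have $|u'(s)|=|u'(s)-u'(x_0)|\le\rho|s-x_0|^{\gamma}$, and integrating gives
\[
|u(x)-u(x_0)|\le\left|\int_{x_0}^{x}|u'(s)|\,ds\right|\le\frac{\rho}{1+\gamma}\,|x-x_0|^{1+\gamma}.
\]
Therefore
\[
\frac{|f(u(x))-f(u(x_0))|}{|x-x_0|}\le C\left(\frac{\rho}{1+\gamma}\right)^{\alpha}|x-x_0|^{\alpha(1+\gamma)-1},
\]
and because $\alpha>\frac{1}{1+\gamma}$ the exponent $\alpha(1+\gamma)-1$ is positive, so the difference quotient of $f\circ u$ at $x_0$ tends to $0$; thus $f\circ u$ is differentiable at $x_0$ with $(f\circ u)'(x_0)=0$.

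I expect the only genuinely delicate point to be the blow-up transfer in part (a): one must carry the sequence realizing $\limsup=\infty$ for $f$ at $u(x_0)$ back through $u$, which is exactly where local injectivity (a consequence of $u'(x_0)\ne0$) is needed. Everything else is routine estimation, and in (b) the hypothesis $\alpha>\frac{1}{1+\gamma}$ enters only to make the final exponent positive.
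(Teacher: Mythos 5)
Your proof is correct and follows essentially the same route as the paper: in (a) the bi-Lipschitz control $m|x-x_0|\le|u(x)-u(x_0)|\le M|x-x_0|$ from the mean value theorem, transferred through the change of variable $y=u(x)$, and in (b) the bound $|u(x)-u(x_0)|\lesssim|x-x_0|^{1+\gamma}$ combined with the local $\alpha$-H\"older bound on $f$ at $u(x_0)$. The only (immaterial) difference is that you obtain the $|x-x_0|^{1+\gamma}$ estimate in (b) by integrating $|u'(s)|\le\rho|s-x_0|^{\gamma}$, whereas the paper applies the mean value theorem directly.
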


\begin{proof}
    For (a) let $u'(x_0)\ne 0$.  Then, as $u'$ is continuous,
    there is an interval $J_0 = (x_0-\delta , x_0+\delta)$
    such that  $u'$ does not change sign on~$J_0$ and,
    for some constant $C>0$,
    we have $C^{-1} \le |u'(x)| \le C$ for each $x\in J_0$.
    So by the mean value theorem
    $$
    \frac1C |x-\widetilde{x}| \le |u(x)-u(\widetilde{x})|
     \le C |x-\widetilde{x}|
    $$
     holds for $x,\widetilde{x}\in J_0$. 
     Let $I_0 =\{ u(x): x\in J_0\}$. 
     The last inequality gives that
     $u$ is a Lipschitz bijection from $J_0$ onto $I_0$
     with a Lipschitz inverse.       
Hence, since 
$f$ is \adj  $C^{0,\alpha}$ at $u(x_0)$,
\begin{align*}
\limsup_{x\to x_0}&\frac{|f(u(x))- f(u(x_0))|}{|x-x_0|^\alpha}
\\
&=
\limsup_{x\to x_0} \frac{|f(u(x))- f(u(x_0))|}{|u(x)-u(x_0)|^\alpha} 
\left| \frac{u(x)-u(x_0)}{x-x_0}    \right|^\alpha
\\
&\le C^\alpha \limsup_{x\to x_0} \frac{|f(u(x))- f(u(x_0))|}{|u(x)-u(x_0)|^\alpha}
\\
&= C^\alpha \limsup_{y\to u(x_0)} \frac{|f(y)- f(u(x_0))|}{|y-u(x_0)|^\alpha}
\\ 
&< \infty 
\end{align*}
where we have done the change of variable $y=u(x)$ in the last $\limsup$.

Similarly, for each $\beta>\alpha$ 
\begin{align*}
        \limsup_{x\to x_0} &\frac{|f(u(x))- f(u(x_0))|}{|x-x_0|^\beta} \\
          &=\limsup_{x\to x_0} \frac{|f(u(x))- f(u(x_0))|}{|u(x)-u(x_0)|^\beta} 
             \left| \frac{u(x)-u(x_0)}{x-x_0}    \right|^\beta\\
          &\ge \frac1{C^\beta} \limsup_{x\to x_0} \frac{|f(u(x))- f(u(x_0))|}{|u(x)-u(x_0)|^\beta} \\
          &= \frac1{C^\beta }\limsup_{y\to u(x_0)} \frac{|f(y)- f(u(x_0))|}{|y-u(x_0)|^\beta}\\
&=\infty.
\end{align*}
Thus $f\circ u$ is \adj $C^{0,\alpha}$ at $x_0$.

For (b) we have $u'(x_0)=0$ and 
constants $\gamma\in(0,1]$ and  $\rho>0$ 
so that  $|u'(x) - u'(y)| \le \rho |x-y|^\gamma$ for  $x,y\in J$.
By the mean value theorem, for $x$  in the interior of $J$
there is a $\xi$ between $x$ and $x_0$ with
\allowdisplaybreaks
\begin{align*}
    |u(x)-u(x_0)|&= |u'(\xi)| |x-x_0|\\
                 &= |u'(\xi)-u'(x_0)||x-x_0|&& \text{(as $u'(x_0)=0$)}\\
                 &\le \rho |\xi - x_0|^\gamma \, |x-x_0|\\
    & \le \rho |x-x_0|^\gamma \, |x-x_0|&& \text{(as $\xi$ between $x$ and $x_0$)}\\
                & = \rho|x-x_0|^{1+\gamma}.
\end{align*}
Since $f$ is  \adj  $C^{0,\alpha}$ at $u(x_0)$, there is a constant
$K>0$ such that
$|f(y) - f(u(x_0))| \le K|y-u(x_0)|^\alpha$
for $y$ in some neighborhood of $u(x_0)$.
Thus for $x$ close to $x_0$ 
$$
|f(u(x)) - f(u(x_0))|\le K|u(x)- u(x_0)|^\alpha  \le
K\rho^\alpha |x-x_0|^{\alpha(1+\gamma)}
$$
and $\alpha(1+\gamma) >1$ by the  lower bound on  $\alpha$
thus, $(f\circ u)'(x_0)=0$.
\end{proof}

\begin{thm}\label{thm:main_example}
  Let $\gamma\in(0,1]$ and 
  $\alpha_1, \alpha_2 ,\ldots, \alpha_d$ be distinct 
  numbers in the interval $(\tfrac{1}{1+\gamma},1)$.
  For each
  $j \in \{1,2, \ldots, d\}$, let the function 
  $f_j\cn\R\to\R$ be  \adj $C^{0,\alpha_j}$
  at each point in $\R$. 
Define  $f\cn \R^d \to \R$ by 
$$
f(x_1, x_2 ,\ldots,x_d) = \sum_{j=1}^d f_j(x_j). 
$$
Then $f$ is continuous and, 
for any $C^{1,\gamma}$ test curve $c: [a,b]\to\R^d$,
the composite $f\circ c$ is nowhere differentiable on $(a,b)$.

Moreover,
for any $C^{1,\gamma}$ test curve $c\cn [a,b]\to \R^d$
and any $s_0\in (a,b)$, there
is an $\alpha \in \{ \alpha_1, \alpha_2 ,\ldots, \alpha_d\}$ such
that $f\circ c $ is \adj $C^{0,\alpha}$ at $s_0$. 
\end{thm}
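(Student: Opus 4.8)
The plan is to reduce everything to the one-variable results already in hand. Continuity of $f$ is immediate: each $f_j$, being $C^{0,\alpha_j}$ on all of $\R$, is continuous, so each map $x\mapsto f_j(x_j)$ is continuous on $\R^d$, and $f$ is the finite sum of these.

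Now fix a $C^{1,\gamma}$ test curve $c=(c_1,\dots,c_d)\cn[a,b]\to\R^d$ and a point $s_0\in(a,b)$; it suffices to produce an $\alpha\in\{\alpha_1,\dots,\alpha_d\}$ with $f\circ c$ \adj $C^{0,\alpha}$ at $s_0$, since then Proposition~\ref{prop:not_diff} (applicable because $\alpha<1$) gives non-differentiability at $s_0$, and $s_0\in(a,b)$ is arbitrary. Each coordinate $c_j\cn[a,b]\to\R$ is again $C^{1,\gamma}$ (the $\gamma$-H\"older bound on $c'$ forces the same bound on each $c_j'$, since $|c_j'(s_2)-c_j'(s_1)|\le\|c'(s_2)-c'(s_1)\|$), and $f\circ c=\sum_{j=1}^d f_j\circ c_j$. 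Split the indices into $S=\{\,j:c_j'(s_0)\ne0\,\}$ and its complement; since $c$ has unit speed, $\|c'(s_0)\|=1$, so $S\ne\emptyset$. For $j\in S$, Lemma~\ref{lem:fou}(a) makes $f_j\circ c_j$ \adj $C^{0,\alpha_j}$ at $s_0$; for $j\notin S$, the hypothesis $\alpha_j>\tfrac1{1+\gamma}$ together with $c_j'(s_0)=0$ and Lemma~\ref{lem:fou}(b) makes $f_j\circ c_j$ differentiable at $s_0$ with derivative $0$. As the $\alpha_j$ are distinct, Lemma~\ref{lem:sum} shows $g:=\sum_{j\in S}f_j\circ c_j$ is \adj $C^{0,\alpha}$ at $s_0$ with $\alpha=\min_{j\in S}\alpha_j\in\{\alpha_1,\dots,\alpha_d\}$, while $\varphi:=\sum_{j\notin S}f_j\circ c_j$ is differentiable at $s_0$.

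The one step not literally covered by the earlier lemmas is the final combination: if $g$ is \adj $C^{0,\alpha}$ at $s_0$ with $0<\alpha<1$ and $\varphi$ is differentiable at $s_0$, then $g+\varphi$ is \adj $C^{0,\alpha}$ at $s_0$. This is proved exactly as Lemma~\ref{lem:sum}: differentiability gives $|\varphi(s)-\varphi(s_0)|\le L|s-s_0|$ for $s$ near $s_0$, so the quotient $|\varphi(s)-\varphi(s_0)|/|s-s_0|^{\beta}$ is dominated by $L|s-s_0|^{1-\beta}$, which tends to $0$ for every $\beta<1$; the triangle inequality then gives the finite $\limsup$ at exponent $\alpha$, the reverse triangle inequality gives the infinite $\limsup$ at every exponent $\beta\in(\alpha,1)$, and this propagates to all $\beta>\alpha$ by the monotonicity observation closing the proof of Lemma~\ref{lem:sum}. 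Applying this with $g$ and $\varphi$ as above shows $f\circ c$ is \adj $C^{0,\alpha}$ at $s_0$, completing both assertions. I expect this last step to be the only real obstacle; the rest is bookkeeping with coordinates plus invocations of Lemmas~\ref{lem:sum} and~\ref{lem:fou}. It is worth flagging that $\alpha<1$ is used twice — here, to absorb the differentiable (possibly not \adj $C^{0,1}$) terms, and again via Proposition~\ref{prop:not_diff} — which is why the bound $\alpha_j<1$ appears in the hypotheses.
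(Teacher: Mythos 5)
Your proof is correct and follows the same route as the paper's: write $f\circ c=\sum_j f_j\circ c_j$, split indices according to whether $c_j'(s_0)$ vanishes, apply Lemma~\ref{lem:fou}(a) and Lemma~\ref{lem:sum} to the nonvanishing group and Lemma~\ref{lem:fou}(b) to the rest, using unit speed to see the first group is nonempty. The only difference is that you explicitly justify the final step --- that adding a function differentiable at $s_0$ preserves being \adj $C^{0,\alpha}$ at $s_0$ when $\alpha<1$ --- which the paper asserts without proof; your argument for it is right.
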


A concrete choice of the function $f_j$ is to let $f_j$
be the \adj $C^{0,\alpha}$ function
constructed in Theorem~\ref{thm:strict_fcn} 
with $\alpha=\alpha_j$.

\begin{proof}
  Note $f$ is continuous since it is the sum of  continuous function.
  Let $\gamma\in(0,1]$.
    Let $c \cn [a,b]\to \R^d$ be a $C^{1,\gamma}$ test curve. 
   Fix $s_0\in (a,b)$.
   By Proposition~\ref{prop:not_diff},
    it  only remains  to show 
    $f\circ c$ is \adj $C^{0,\alpha_j}$ at~$s_0$  for some~$j$.

    Write the test curve $c$ as 
    $$
    c(s) = ( u_1(s), u_2(s) ,\ldots,u_d(s)).
    $$
    Then each function $u_j$ is $C^1$.  In this notation
    $$
    (f\circ c) (s) = \sum_{j=1}^d (f_j\circ u_j)(s).
    $$
    As $c\in C^{1,\gamma}$, there is a 
    $\rho>0$ so that $\| c'(s_2) - c'(s_1)\| \le \rho |s_2 - s_1|^\gamma$;
    thus, $|u_j'(s_2) - u_j'(s_1)| \le \rho |s_2- s_1|^\gamma$ 
    for all~$j$ and $s_1, s_2\in [a,b]$. 
    Decompose $f\circ c$ as $f\circ  c =f_A + f_B$ where
    \begin{equation*}
        A = \{ j : u_j'(s_0) \ne 0\}, \qquad 
        B= \{ j : u_j'(s_0) = 0\}
    \end{equation*}
    and, following the convention that the sum over the empty set is zero, 
    \begin{equation*}
    f_A = \sum_{j\in A} f_j\circ u_j, \qquad f_B = \sum_{j\in B} f_j\circ u_j.
    \end{equation*}
    By Lemma \ref{lem:fou},   
    $f_j \circ u_j$ is \adj $C^{0,\alpha_j}$ at $s_0$ when $j\in A$
    while $f_j\circ u_j$ has $(f_j\circ u_j)'(s_0)=0$ when $j \in B$.
    As $c'(s_0)\ne 0$, it follows $A\ne \varnothing$;
    let $\alpha = \min_{j\in A}\alpha_j$.
    By Lemma \ref{lem:sum}, 
    $f_A$ \adj  $C^{0,\alpha}$ at $s_0$. 
    As $f_B$ is differentiable at $s_0$,
    the sum $f\circ c=f_A + f_B$ is \adj $C^{0,\alpha}$ at $s_0$.
\end{proof}

\section{Proof of the main theorem}\label{section:PfMainThm} 

Let $U$ be a bounded open set in $\R^d$. Let $\overline U$ be
the closure of $U$ and $\f U$ be the boundary of $U$.
For a nonnegative integer $n$,  let
\begin{equation*}
K_n := \{ u\in U: \| u-x\| \ge 1/n \text{ for each  $x\in \f U$}\}
\end{equation*}
be the set of
points in $U$ that are  a distance of at least $1/n$
from $\f U$.

\begin{defn}\label{def:Cn}
Let  $n$ be a positive integer and $\gamma\in (0,1]$.
Let $\mathcal C_n^\gamma (U)$ be the 
set of test curves  $c\cn [-1/n,1/n] \to U$ satisfying 
\begin{enumerate}[(a)]
    \item $\|c'(s_2) - c'(s_1)\| \le n | s_2 - s_1|^{\gamma} $ for all 
      $s_1, s_2\in [-1/n,1/n]$
    \item $\| c(s) - x\|\ge 1/n$ for all $s\in [-1/n,1/n]$ and $x\in \f U$.  
\end{enumerate}
\end{defn}

\begin{lemma}\label{lem:C_nc_cpt}
  The set $\mathcal C_n^\gamma (U)$ is compact in the $C^1$ topology.
\end{lemma}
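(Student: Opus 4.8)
The plan is to apply the Arzel\`a--Ascoli theorem, using that for curves on the fixed compact interval $[-1/n,1/n]$ convergence in the $C^1$ topology means uniform convergence of the curves together with uniform convergence of their derivatives. Since a subset of a metric space is compact precisely when it is sequentially compact, it suffices to show that every sequence $(c_k)$ in $\mathcal C_n^\gamma(U)$ has a subsequence converging in $C^1$ to a curve that again lies in $\mathcal C_n^\gamma(U)$; there is nothing to prove if $\mathcal C_n^\gamma(U)=\varnothing$.

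First I would verify the Arzel\`a--Ascoli hypotheses. Each $c\in\mathcal C_n^\gamma(U)$ has unit speed, so $\|c'(s)\|=1$ for all $s$; hence the family $\{c':c\in\mathcal C_n^\gamma(U)\}$ is uniformly bounded, and condition (a) supplies the common modulus of continuity $t\mapsto nt^\gamma$, so this family is equicontinuous. The family $\{c:c\in\mathcal C_n^\gamma(U)\}$ is uniformly bounded because $U$ is bounded, and it is $1$-Lipschitz (hence equicontinuous) since unit speed gives $\|c(s_2)-c(s_1)\|\le|s_2-s_1|$. Arzel\`a--Ascoli then yields a subsequence, which I will not relabel, with $c_k\to c$ uniformly and $c_k'\to g$ uniformly for continuous $c,g\colon[-1/n,1/n]\to\R^d$. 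By the standard theorem on term-by-term differentiation of uniformly convergent sequences, $c$ is $C^1$ with $c'=g$, so $c_k\to c$ in the $C^1$ norm.

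Finally I would check $c\in\mathcal C_n^\gamma(U)$ by passing to the limit in the defining conditions. Taking $k\to\infty$ in $\|c_k'(s)\|=1$ gives $\|c'(s)\|=1$, so $c$ has unit speed; taking $k\to\infty$ in condition (a) for the $c_k$ gives $\|c'(s_2)-c'(s_1)\|\le n|s_2-s_1|^\gamma$, so $c$ is $C^{1,\gamma}$ and hence a test curve, and (a) holds for $c$. Taking $k\to\infty$ in condition (b) gives $\|c(s)-x\|\ge 1/n$ for all $s\in[-1/n,1/n]$ and all $x\in\f U$; moreover each $c_k(s)\in\overline U$ and $\overline U$ is closed, so $c(s)\in\overline U$, while $\dist(c(s),\f U)\ge 1/n>0$ forces $c(s)\notin\f U$, and since $U$ is open $\overline U\setminus\f U=U$, whence $c(s)\in U$. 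Thus $c$ maps $[-1/n,1/n]$ into $U$ and satisfies (b), so $c\in\mathcal C_n^\gamma(U)$, and the proof is complete.

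The argument is essentially routine; the only points needing care are that $C^1$ convergence reduces to uniform convergence of curves and of derivatives, that the uniform limit of the derivatives is the derivative of the limit, and that the pointwise constraints (a) and (b)---in particular the requirement that the limit curve stay in the \emph{open} set $U$---survive passage to the limit.
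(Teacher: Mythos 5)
Your proof is correct and follows essentially the same route as the paper: Arzel\`a--Ascoli applied to the derivatives (uniformly bounded by unit speed, equicontinuous by condition (a)), extraction of a $C^1$-convergent subsequence, and passage to the limit in the defining conditions, including the observation that condition (b) forces the limit curve to stay in $U$ rather than merely in $\overline U$. The only cosmetic difference is that you invoke Arzel\`a--Ascoli a second time for the curves themselves, whereas the paper gets uniform convergence of $c_\ell$ from convergence of $c_\ell(0)$ together with uniform convergence of $c_\ell'$; both are standard and equivalent here.
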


Recall the $C^1$ topology on $C^1$ functions $c\cn [a,b]\to U$
is given by the
norm $\|c\|_{C^1} = \|c\|_{L^\infty} + \|c'\|_{L^\infty}$.

\begin{proof}
Note $K_n$ is a closed bounded, and thus
compact, subset of $\R^d$.
Condition (b) in  Definition \ref{def:Cn} 
can be restated as $c \cn [-1/n,1/n] \to K_n$.
Let $\la c_\ell \ra_{\ell=1}^\infty$
be a sequence from $\mathcal C_n^\gamma (U)$.
It suffices to find a subsequence of $\la c_\ell \ra_{\ell=1}^\infty$
that converges in the $C^1$ topology
to an element of $\mathcal C_n^\gamma (U)$.

The set $\{ c' \cn c\in \mathcal C_n^\gamma (U)\}$ 
is uniformly equicontinuous as the functions $c'$ satisfy
the uniform H\"older condition in (a) and this set is bounded in the
sup norm topology 
as the curves
are unit speed.  
Therefore the Arzel\`a–Ascoli Theorem
(cf.\ \cite[p.~137]{Folland:Analysis}) implies 
the closure of  $\{ c' \cn c\in \mathcal C_n^\gamma (U)\}$ is compact
sup norm topology. 
So we can pass to a subsequence and
assume $\la c'_\ell \ra_{\ell=1}^\infty$
converges uniformly on $[-1/n,1/n]$ to some function.
By the compactness of $K_n$ we can pass to
a further subsequence and assume 
$\la c_\ell (0) \ra_{\ell=1}^\infty$
converges to some point in $K_n$. 
 Thus (cf.~\cite[\S8.2]{BartleSherbert4})
 there is a differentiable function $c\cn [-1/n,1/n] \to \R^d$
 such that
\begin{align*}
 \lim_{\ell \to \infty} c_\ell &= c \\
 \lim_{\ell \to \infty} c_\ell' &= c'
\end{align*}
with both convergences being uniformly on  $[-1/n,1/n]$.
So $c$ has unit speed as each $c_\ell$ does.
Condition (a) holds since
if $s_1, s_2\in [-1/n,1/n]$
then 
 $
 \| c'(s_2) - c'(s_1)\| = \lim_{\ell \to \infty}
 \| c_\ell'(s_2) - c_\ell'(s_1) \| \le n |s_2-s_1|^{\gamma}
 $.
The curve~$c$ is $K_n$-valued as each $c_\ell$ is and $K_n$ is closed.
Thus  $c\in \mathcal C_n^\gamma (U)$.
\end{proof}

\begin{defn}
 Let  $n$ be a positive integer and $\gamma\in (0,1]$.
Let $\mathcal F_n^\gamma (U)$ be the set of $f\in C(\overline U)$ such that 
for some $c \in \mathcal C_n^\gamma (U)$  the  inequality
\begin{equation}\label{eq:F_d-def}
| f(c(s)) - f(c(0)) |\le n |s|
\end{equation}
holds for all $s\in [-1/n,1/n]$.
\end{defn}

\begin{lemma}\label{lem:not_dense}
  The set $\mathcal F_n^\gamma (U)$ is a closed nowhere dense
  subset of $C(\overline U)$.
\end{lemma}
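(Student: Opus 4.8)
The plan is to establish two facts — that $\mathcal F_n^\gamma(U)$ is closed, and that its complement is dense in $C(\overline U)$ — which together say it is nowhere dense. For closedness, suppose $f_k\to f$ in $C(\overline U)$ with each $f_k\in\mathcal F_n^\gamma(U)$, witnessed by curves $c_k\in\mathcal C_n^\gamma(U)$ satisfying $|f_k(c_k(s))-f_k(c_k(0))|\le n|s|$ on $[-1/n,1/n]$. By Lemma~\ref{lem:C_nc_cpt} the family $\mathcal C_n^\gamma(U)$ is $C^1$-compact, so after passing to a subsequence we may assume $c_k\to c$ in the $C^1$ topology for some $c\in\mathcal C_n^\gamma(U)$; in particular $c_k\to c$ uniformly. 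For each fixed $s$ one has $|f_k(c_k(s))-f(c(s))|\le\|f_k-f\|_\infty+|f(c_k(s))-f(c(s))|\to0$, using uniform convergence and the continuity of $f$, and likewise at $s=0$. Letting $k\to\infty$ in the defining inequality yields $|f(c(s))-f(c(0))|\le n|s|$ for all $s\in[-1/n,1/n]$, so $f\in\mathcal F_n^\gamma(U)$ and the set is closed.

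For density of the complement I would use the auxiliary function from Section~\ref{section:main_example}. Since the ambient exponent $\gamma$ is fixed, pick distinct $\alpha_1,\dots,\alpha_d\in(\tfrac1{1+\gamma},1)$ and, by Theorem~\ref{thm:strict_fcn}, bounded \adj $C^{0,\alpha_j}$ functions $f_j\cn\R\to\R$; set $\phi(x_1,\dots,x_d)=\sum_{j=1}^d f_j(x_j)$, which is bounded on $\R^d$ with $M:=\sup_{\R^d}|\phi|<\infty$. Given $f\in C(\overline U)$ and $\e>0$, choose a polynomial $p$ with $\|f-p\|_\infty<\e/2$ (Stone–Weierstrass on the compact set $\overline U$) and a scalar $0<\delta<\e/(2M)$, and put $g=p+\delta\phi\in C(\overline U)$, so $\|f-g\|_\infty<\e$. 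The claim is that $g\notin\mathcal F_n^\gamma(U)$, i.e. that for \emph{every} $c\in\mathcal C_n^\gamma(U)$ there is some $s$ with $|g(c(s))-g(c(0))|>n|s|$.

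To see this, fix such a $c$: it is a $C^{1,\gamma}$ test curve, so by Theorem~\ref{thm:main_example} (applied at the interior point $0$) the composite $\phi\circ c$ is \adj $C^{0,\alpha}$ at $0$ for some $\alpha\in\{\alpha_1,\dots,\alpha_d\}$, in particular $\alpha<1$; taking exponent $\beta=1>\alpha$ in the definition gives $\limsup_{s\to0}|\phi(c(s))-\phi(c(0))|/|s|=\infty$, hence the same with $\delta\phi$ in place of $\phi$. Meanwhile $p\circ c$ is $C^1$ on $[-1/n,1/n]$, hence Lipschitz there with some constant $L$, so $|p(c(s))-p(c(0))|\le L|s|$. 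The reverse triangle inequality then gives, for $s\neq0$,
$$
\frac{|g(c(s))-g(c(0))|}{|s|}\ \ge\ \delta\,\frac{|\phi(c(s))-\phi(c(0))|}{|s|}\ -\ L,
$$
whose $\limsup$ as $s\to0$ is $\infty$; so some $s\in[-1/n,1/n]\setminus\{0\}$ satisfies $|g(c(s))-g(c(0))|>n|s|$, i.e. \eqref{eq:F_d-def} fails for $c$. As $c$ was arbitrary, $g\notin\mathcal F_n^\gamma(U)$, so the complement of $\mathcal F_n^\gamma(U)$ is dense; combined with closedness, $\mathcal F_n^\gamma(U)$ is nowhere dense.

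The heart of the matter is the density step, and the point to appreciate is that it needs \emph{no} curve-by-curve construction: a single fixed perturbation $\delta\phi$, coming from Theorem~\ref{thm:main_example}, already has infinite difference quotient at $0$ along every $C^{1,\gamma}$ test curve, and a $C^1$ background term cannot absorb this. The closedness step, by contrast, is a routine compactness argument once Lemma~\ref{lem:C_nc_cpt} is in hand; the only thing to be careful about there is keeping track that $c_k\to c$ uniformly so that $f_k(c_k(\cdot))\to f(c(\cdot))$ despite both the function and the curve moving.
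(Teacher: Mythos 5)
Your proof is correct and follows essentially the same route as the paper: the closedness step is the identical compactness argument via Lemma~\ref{lem:C_nc_cpt}, and the density step uses the same perturbation (a smooth/polynomial approximant plus a small multiple of the auxiliary function of Theorem~\ref{thm:main_example}) together with the same blow-up of the difference quotient at $s=0$ along an arbitrary curve in $\mathcal C_n^\gamma(U)$. The only difference is presentational: you show directly that the complement is dense, whereas the paper argues by contradiction from a hypothetical interior point of the closed set $\mathcal F_n^\gamma(U)$ --- logically the same argument.
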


\begin{proof}
Let $\gamma \in (0,1]$. 
Let $\la f_\ell\ra_{\ell =1}^\infty$ be a sequence in $\mathcal F_n^\gamma (U)$
with  $f_\ell \to f$ uniformly  on~$\overline U$
for some   $f\in C(\overline U)$.
To show $\mathcal F_n^\gamma (U)$ is closed it is enough to show
$f\in\mathcal F_n^\gamma (U)$. 
For each $\ell$ there is
 $c_\ell \in \mathcal C_n^\gamma (U)$
with 
$$
| f_\ell(c_\ell(s)) - f_\ell(c_\ell(0))|\le n |s|
$$
for all $s\in [-1/n,1/n]$.
Since  $\mathcal C_n^\gamma (U)$ is compact, 
passing to a subsequence, we  assume there
is a $c\in \mathcal C_n^\gamma (U)$  such that
$c_\ell \to c$ in the $C^1$ topology.
Then
$$
\lim_{\ell \to \infty} f_\ell(c_\ell(s)) = f(c(s))
$$
for each  $s\in [-1/n,1/n]$ since
$f$ is continuous, $c_\ell(s)\to c(s)$, and 
\begin{align*}
  |f_\ell(c_\ell(s))- f(c(s))|&\le |f_\ell(c_\ell(s)) - f(c_\ell(s))| 
        + |f(c_\ell(s)) - f(c(s))|\\
    &\le \| f_\ell - f\|_{L^\infty} + |f(c_\ell(s)) - f(c(s))|\\
    & \stackrel{\ell\to \infty}{\longrightarrow} 0.
\end{align*}
Therefore
$$
|f(c(s)) - f(c(0))| = \lim_{\ell\to \infty} |f_\ell(c_\ell(s))-f_\ell(c_\ell(0))|
\le n |s|
$$
for all $s\in [-1/n,1/n]$.  
Thus $f\in \mathcal F_n^\gamma (U)$.
So  $\mathcal F_n^\gamma (U)$ is closed.

It remains to show $\mathcal F_n^\gamma (U)$ is nowhere dense. 
Toward a contradiction assume this is not the case.
Then the closed set $\mathcal F_n^\gamma (U)$ has nonempty interior. 
Thus $\mathcal F_n^\gamma (U)$ contains 
a nonempty  open subset $\mathcal V$ of $C(\overline U)$.
As  the  smooth functions are dense in $C(\overline U)$
there is a $f_0\in \mathcal V$ that is~$C^\infty$.
Let $f$ be the function constructed in  Theorem~\ref{thm:main_example}.

For a small enough positive $\delta$
the sum $f_\delta:=f_0+\delta f$ 
is in the open set~$\mathcal V$ and
so $f_\delta \in  \mathcal F_n^\gamma (U)$.
By the definition of $ \mathcal F_n^\gamma (U)$
there is~$c\in \mathcal C_n^\gamma  (U)$ so that
the Lipschitz inequality
\begin{equation*}
    |f_\delta(c(s)) - f_\delta(c(0))| \le n |s|
\end{equation*}
holds for each  $s\in[-1/n,1/n]$.
However, by Theorem~\ref{thm:main_example}, $f\circ c$ is \adj~$C^{0,\alpha}$
at $0$ for some $\alpha\in (0,1)$
since the curve $c$ is $C^{1,\gamma}$.
As~$f_0$ is smooth, $f_0\circ c$ is~$C^1$.
Thus the sum $f_\delta\circ c = f_0\circ c +  \delta ( f\circ c) $
is \adj~$C^{0,\alpha}$ at~$0$ where $\alpha<1$.
Hence 
\begin{equation*} 
   \limsup_{s\to 0}  \frac{|f_\delta(c(s)) -f_\delta(c((0))|}{|s-0|} = \infty, 
  \end{equation*} 
contradicting  the previous 
inequality.
So  $\mathcal F_n^\gamma (U)$ is nowhere dense.
\end{proof}

\begin{proof}[Proof of the Main Theorem]
Let 
\begin{equation*}
\mathcal G :=  C(\overline U) \setminus 
 \bigg(\bigcup_{n=1}^\infty \mathcal F_n^{1/n} (U)\bigg).
\end{equation*}
Then $\mathcal G$ is a  $G_\delta$ 
set that is dense in  $C(\overline U)$ by
the Baire category theorem and Lemma~\ref{lem:not_dense}.
It suffices to   show  the functions in $\mathcal G$
are strongly nowhere differentiable on $U$.
Let $f\in \mathcal G$.

Let $c\cn [a,b]\to U$ be a test curve.
Towards a  contradiction,
assume $f\circ c$ is  differentiable at some  $s_0\in (a,b)$.
Then there
is a constant $M$ such that for each $s\in [a,b]$
\begin{equation}\label{eq:f(c)}
| (f \circ c) (s) - (f\circ c)(s_0)| \le M |s-s_0|.
\end{equation}
As $c$ is a test curve,
there is a  $\gamma\in (0,1]$ and a constant $\rho$ so that
$\| c'(s_2) - c'(s_1)\| \le \rho |s_2- s_1|^\gamma$
for $s_1, s_2 \in [a,b]$.
Choose $n$ so that $n\geq\max(M,\tfrac{1}{\gamma},\rho,2)$
and   $[s_0-1/n,s_0+1/n]\subseteq (a,b)$.
The image of $c$, being compact, is a positive distance
from the boundary of the open set~$U$;
thus, by possibly increasing the size of $n$, we can also assume
$c(s)\in K_n$ for each  $s\in [a,b]$.

Define $\tilde c \cn [-1/n,1/n] \to U$ by 
$$
\tilde c(t) = c(t + s_0). 
$$
Then $\tilde c$ is of unit speed
and $K_n$-valued
as
$c$ is.
For  $t_1, t_2\in [-1/n,1/n]$,
we have  $| t_2-t_1 | \leq 1$ as $n\geq 2$ and so,
since $\rho \leq n$ and $1/n \leq \gamma$,
\begin{align*}
  \| \tilde c'(t_2) - \tilde c'(t_1)\|
   & = \| c'(t_2 + s_0)   - c'(t_1 + s_0 )\|\\
      &\le \rho |t_2-t_1|^\gamma \le n |t_2-t_1|^{1/n}.
\end{align*} 
Thus $\tilde c \in \mathcal C_n^{1/n} (U)$.
The inequality in \eqref{eq:f(c)}
along with $M\le n$ give,
for each $t\in [-1/n,1/n]$,
\begin{align*}
|f(\tilde c(t)) - f(\tilde c(0))|
 = |f(c(t+s_0)) - f(c(s_0))| 
 \le M|t| \le n|t| .
\end{align*} 
Hence  $f\in \mathcal F_n^{1/n} (U)$, which contradicts  $f \in \mathcal G$.
So $f\circ c$ is  not differentiable at $s_0$.
Thus $f\circ c$ is nowhere differentiable on $(a,b)$. 

Hence  $f$ is strongly nowhere differentiable, completing the proof. 
\end{proof}

\section{Open questions}

In the Main Theorem we test multivariant functions
for differentiability by testing along each unit speed curve $C^1$
which is also
$C^{1,\gamma}$ for some $\gamma\in (0,1]$.
In terms of checking for  differentiability,
the extra smoothness assumption is not particularly natural. 
The following questions,
which we state as conjectures, seem natural.

\begin{conjecture}\label{conj:1}
There is a continuous function $f\cn \R^d \to \R$ whose restriction
to any  $C^1$ unit speed curve is nowhere differentiable. 
\end{conjecture}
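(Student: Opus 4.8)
The plan is to run the Baire category machinery of Section~\ref{section:PfMainThm} with the H\"older restriction dropped. For each positive integer $n$ let $\mathcal C_n(U)$ be the set of all $C^1$ unit speed curves $c\cn[-1/n,1/n]\to K_n$ --- that is, Definition~\ref{def:Cn} with condition~(a) removed --- and let $\mathcal F_n(U)$ be the set of $f\in C(\overline U)$ for which $|f(c(s))-f(c(0))|\le n|s|$ holds on $[-1/n,1/n]$ for some $c\in\mathcal C_n(U)$. If one can show each $\mathcal F_n(U)$ is closed and nowhere dense, then $\mathcal G:=C(\overline U)\setminus\bigcup_{n}\mathcal F_n(U)$ is a dense $G_\delta$, and the proof of the Main Theorem carries over with no essential change --- indeed more easily, since the shifted curve $\tilde c(t)=c(t+s_0)$ now lands in $\mathcal C_n(U)$ with no H\"older check needed. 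So every $f\in\mathcal G$ would restrict to a nowhere differentiable function on every $C^1$ unit speed curve into $U$.

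The first difficulty is the closedness of $\mathcal F_n(U)$. In Lemma~\ref{lem:not_dense} this relied on the $C^1$-compactness of $\mathcal C_n^\gamma(U)$ from Lemma~\ref{lem:C_nc_cpt}, used to extract a convergent subsequence of the curves witnessing membership. With no uniform modulus of continuity imposed on the derivatives, $\mathcal C_n(U)$ is not $C^1$-compact: the curves witnessing a convergent sequence $f_\ell$ need only $C^0$-subconverge, to a merely $1$-Lipschitz curve that is no longer a valid witness. The natural remedy, stratifying by modulus of continuity (fix moduli $\omega_1\le\omega_2\le\cdots$ and let $\mathcal C_{n,m}(U)$ consist of the $c\in\mathcal C_n(U)$ with $\|c'(s_2)-c'(s_1)\|\le\omega_m(|s_2-s_1|)$, each now Arzel\`a--Ascoli compact), does not suffice: $\bigcup_{n,m}\mathcal F_{n,m}(U)$ only captures functions differentiable along curves whose derivative has a modulus dominated by some $\omega_m$, and the moduli of continuity admit no countable cofinal family; localizing near the point of differentiability does not help, since the germ of $c'$ there is still unrestricted. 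Some new device is needed to recover compactness.

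The deeper difficulty --- which I expect to be the real obstacle --- is the nowhere density of $\mathcal F_n(U)$, and here the sum construction of Theorem~\ref{thm:main_example} already shows its limits. Writing a test curve as $c=(u_1,\dots,u_d)$ and splitting $A=\{j:u_j'(s_0)\ne0\}$, $B=\{j:u_j'(s_0)=0\}$: part~(a) of Lemma~\ref{lem:fou} uses only continuity of $u_j'$, so for $j\in A$ the term $f_j\circ u_j$ is still \adj $C^{0,\alpha_j}$ at $s_0$. For $j\in B$, however, a merely $C^1$ curve can be flat to any order at $s_0$ --- taking $u_j(s)-u_j(s_0)=(s-s_0)|s-s_0|^{t-1}$ with $1<t<1/\alpha_j$ makes $f_j\circ u_j$ \adj $C^{0,t\alpha_j}$ at $s_0$, with $t\alpha_j$ an arbitrary value in $(\alpha_j,1)$ --- rather than differentiable as in part~(b). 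An adversary can thus tune two coordinates to have the same effective H\"older exponent at $s_0$, exactly the resonant case Lemma~\ref{lem:sum} does not cover, where cancellation among the terms $f_j\circ u_j$ becomes conceivable. Whether such cancellation can actually occur amounts to asking whether one fixed \adj $C^{0,\alpha}$ profile can, after a $C^1$ reparametrization with a critical point, coincide up to a smooth error with the negative of another; I would expect rigidity to forbid this for generic $f_j$, but a proof seems to require genuine work.

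To settle Conjecture~\ref{conj:1} I would therefore pursue one of two routes: either (i) establish the requisite rigidity of \adj $C^{0,\alpha}$ profiles, rule out the resonant cancellations, and push the H\"older-free Baire argument through by coupling that analysis with a modulus stratification in which the admissible modulus of $\mathcal C_n$ is allowed to grow with $n$ (exploiting that only the germ of $c$ at the bad point matters); or (ii) abandon sums of one-variable functions and build $f$ directly so that no direction is aligned with its oscillation at more than finitely many scales --- for instance a rotated lacunary series $f(x)=\sum_{k}2^{-\alpha k}\phi(2^k A_k x)$ with $\phi$ a smooth periodic profile and rotations $A_k\in O(d)$ spreading directions --- then prove the H\"older-free analogue of Lemma~\ref{lem:fou} for it. The honest assessment is that the resonance and cancellation phenomena for badly-behaved $C^1$ curves lie at the heart of the difficulty, which is presumably why the statement is recorded here only as a conjecture.
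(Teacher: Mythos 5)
The statement you were asked about is not a theorem of the paper: it is recorded there explicitly as an open conjecture, and the paper offers no proof of it. Your submission correctly recognizes this --- you do not claim a proof, and you should not be penalized for that. What you have written is an obstacle analysis together with two proposed routes, and it is worth saying that your analysis is accurate and in one respect sharper than the paper's own discussion. The paper's stated obstruction (in its ``Open questions'' section) is the one you identify first: dropping the H\"older condition on $c'$ destroys the Arzel\`a--Ascoli compactness of Lemma~\ref{lem:C_nc_cpt}, the set of all $C^1$ unit speed curves on $[-1/n,1/n]$ is not a countable union of $C^1$-compact sets, and so the closedness argument for $\mathcal F_n$ in Lemma~\ref{lem:not_dense} collapses; your observation that moduli of continuity admit no countable cofinal family is exactly why the stratification remedy fails. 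Strictly speaking the paper raises this obstruction against Conjecture~\ref{conj:2} (the Baire category version), whereas Conjecture~\ref{conj:1} only asks for existence; your second, ``deeper'' obstacle is the one actually relevant to Conjecture~\ref{conj:1}, and the paper does not discuss it. You are right that part (b) of Lemma~\ref{lem:fou} genuinely uses $\gamma>0$: a merely $C^1$ coordinate $u_j$ with $u_j'(s_0)=0$ can satisfy $|u_j(s)-u_j(s_0)|\asymp|s-s_0|^{t}$ with $1<t<1/\alpha_j$, so $f_j\circ u_j$ contributes an effective exponent $t\alpha_j\in(\alpha_j,1)$ rather than a differentiable term, and an adversarial curve can tune two coordinates to the same effective exponent --- precisely the degenerate case excluded by the hypothesis of distinct exponents in Lemma~\ref{lem:sum}, where cancellation cannot be ruled out by the paper's methods. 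So the sum construction of Theorem~\ref{thm:main_example} does not settle Conjecture~\ref{conj:1} as it stands, and your assessment that new ideas (rigidity of the \adj $C^{0,\alpha}$ profiles, or a genuinely multidimensional construction) are required is the correct conclusion.
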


\begin{conjecture}\label{conj:2}
Let $U$ is a bounded open set in $\R^d$.
The set of  $C(\overline U)$ functions 
whose restriction to each  $C^1$ unit speed curve
is nowhere differentiable 
contains a dense $G_\delta$ subset.
\end{conjecture}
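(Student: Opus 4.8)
The plan is to run the standard Baire category argument, the one new ingredient being the auxiliary function of Theorem~\ref{thm:main_example}. First I would reformulate the failure of strong nowhere differentiability as membership in a countable union of ``bad'' sets. If $f\circ c$ were differentiable at some $s_0\in(a,b)$ for a test curve $c$ (with $c'$ being $\gamma$-H\"older with constant $\rho$), then $|f(c(s))-f(c(s_0))|\le M|s-s_0|$ for $s$ near $s_0$ and some $M$; translating so that $s_0=0$ and taking $n$ large (in particular $n\ge\max(M,\rho,1/\gamma,2)$, with $[s_0-1/n,s_0+1/n]\subseteq(a,b)$ and $1/n$ below the distance from the compact image of $c$ to $\f U$), the reparametrized curve $t\mapsto c(t+s_0)$ on $[-1/n,1/n]$ lands in the class $\mathcal C_n^{1/n}(U)$ of unit-speed curves $\widehat c\cn[-1/n,1/n]\to U$ with $\|\widehat c'(t_2)-\widehat c'(t_1)\|\le n|t_2-t_1|^{1/n}$ that stay a distance $\ge 1/n$ from $\f U$ (the H\"older bound passes because $|t_2-t_1|\le 1$, $\gamma\ge 1/n$, and $\rho\le n$), and $f$ lands in the set $\mathcal F_n^{1/n}(U)$ of those $f\in C(\overline U)$ admitting such a witnessing curve with $|f(\widehat c(t))-f(\widehat c(0))|\le n|t|$ on $[-1/n,1/n]$. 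So it suffices to prove that each $\mathcal F_n^{1/n}(U)$ is closed and nowhere dense; then $\mathcal G:=C(\overline U)\setminus\bigcup_{n}\mathcal F_n^{1/n}(U)$ is a dense $G_\delta$, and the contrapositive of the reformulation above shows every $f\in\mathcal G$ is strongly nowhere differentiable.

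For closedness I would first record that $\mathcal C_n^\gamma(U)$ is compact in the $C^1$ topology: the derivatives are uniformly bounded (unit speed) and uniformly equicontinuous (the H\"older bound), so Arzel\`a--Ascoli yields a uniformly convergent subsequence of them, and after also extracting so that the base points converge in the compact set $K_n:=\{u\in U:\|u-x\|\ge 1/n\ \text{for all}\ x\in\f U\}$, termwise integration produces a $C^1$ limit curve that again lies in $\mathcal C_n^\gamma(U)$ (unit speed, the H\"older bound, and $K_n$-valuedness all survive the limit). Granting that, if $f_\ell\to f$ uniformly with witnesses $c_\ell\in\mathcal C_n^\gamma(U)$, I pass to a subsequence with $c_\ell\to c$ in $C^1$; then $f_\ell(c_\ell(t))\to f(c(t))$ for each $t$ because $|f_\ell(c_\ell(t))-f(c(t))|\le\|f_\ell-f\|_\infty+|f(c_\ell(t))-f(c(t))|\to 0$, so the Lipschitz inequality $|f(c(t))-f(c(0))|\le n|t|$ survives and $f\in\mathcal F_n^\gamma(U)$.

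The main point is that $\mathcal F_n^{1/n}(U)$, being closed, is nowhere dense, i.e.\ has empty interior. If not, then by density of the $C^\infty$ functions in $C(\overline U)$ its interior contains a $C^\infty$ function $f_0$. Let $f=\sum_{j=1}^d f_j(x_j)$ be the function of Theorem~\ref{thm:main_example} taken with $\gamma=1/n$ and with $d$ distinct exponents $\alpha_1,\dots,\alpha_d\in(\tfrac{1}{1+\gamma},1)$. For $\delta>0$ small enough $f_0+\delta f$ is still in that interior, hence in $\mathcal F_n^{1/n}(U)$, so there is $c\in\mathcal C_n^{1/n}(U)$ with $|(f_0+\delta f)(c(t))-(f_0+\delta f)(c(0))|\le n|t|$ on $[-1/n,1/n]$, forcing $\limsup_{t\to 0}|(f_0+\delta f)(c(t))-(f_0+\delta f)(c(0))|/|t|<\infty$. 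But since $c$ is $C^{1,1/n}$, Theorem~\ref{thm:main_example} gives that $f\circ c$ is \adj $C^{0,\alpha}$ at $0$ for some $\alpha\in(0,1)$; the $C^1$ summand $f_0\circ c$ does not spoil this (the reverse triangle inequality, exactly as in the proof of Lemma~\ref{lem:sum} with the $C^1$ term in the role of the larger-exponent term, using $\alpha<1$), so $(f_0+\delta f)\circ c$ is \adj $C^{0,\alpha}$ at $0$, and Proposition~\ref{prop:not_diff} then forces the above $\limsup$ to be $\infty$ --- a contradiction. Hence $\mathcal F_n^{1/n}(U)$ is nowhere dense and the Baire category theorem completes the proof.

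I expect the real obstacle to be not in this Baire scheme, which is the classical template, but in Theorem~\ref{thm:main_example}: exhibiting a single continuous $f\cn\R^d\to\R$ whose restriction to \emph{every} unit-speed $C^{1,\gamma}$ curve is not merely nowhere differentiable but quantitatively so --- \adj $C^{0,\alpha}$ at each point for a controlled $\alpha<1$ --- since it is precisely this quantitative strength that lets $f_0+\delta f$ escape $\mathcal F_n^{1/n}(U)$ whatever the smooth part $f_0$ is. Within that, the delicate case is a parameter $s_0$ at which some coordinate velocity $u_j'(s_0)$ vanishes: there one must check that $f_j\circ u_j$ is actually differentiable at $s_0$ with derivative $0$ (rather than ruining the H\"older estimate), and this is exactly where $c\in C^{1,\gamma}$ and $\alpha_j>\tfrac{1}{1+\gamma}$ enter, via $|u_j(s)-u_j(s_0)|\le\rho|s-s_0|^{1+\gamma}$ and $\alpha_j(1+\gamma)>1$; this also explains why the exponents must be squeezed into the window $(\tfrac{1}{1+\gamma},1)$ and why the extra $C^{1,\gamma}$ regularity of the test curve is assumed.
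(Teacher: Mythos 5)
The statement you are proving is Conjecture~\ref{conj:2}, which the paper leaves \emph{open}; what you have written out is essentially the paper's proof of its Main Theorem, which is a strictly weaker statement. The Main Theorem concerns test curves that are unit speed \emph{and} $C^{1,\gamma}$ for some $\gamma\in(0,1]$; the conjecture drops the H\"older hypothesis on the derivative and asks about \emph{every} $C^1$ unit speed curve. Your argument does not reach that larger class, and the gap is not cosmetic.

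The first break is in your opening reduction. You assume the offending curve $c$ has $c'$ $\gamma$-H\"older with constant $\rho$ and then choose $n\ge\max(M,\rho,1/\gamma,2)$ to place the translate of $c$ into $\mathcal C_n^{1/n}(U)$. A general $C^1$ unit speed curve has a merely continuous derivative, with no H\"older modulus of any exponent, so it lies in no $\mathcal C_n^{1/n}(U)$ whatsoever. Consequently a function $f$ in your residual set $\mathcal G$ could still be differentiable along such a curve: avoiding $\bigcup_n\mathcal F_n^{1/n}(U)$ only rules out Lipschitz behavior of $f\circ c$ at a point when $c$ admits some H\"older bound on $c'$. One cannot repair this by enlarging the curve classes to all unit speed $C^1$ curves, because (as the paper notes in its discussion of the conjecture) that set is not a countable union of $C^1$-compact sets, and the compactness of $\mathcal C_n^\gamma(U)$ via Arzel\`a--Ascoli is exactly what makes $\mathcal F_n^\gamma(U)$ closed.

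The second break is in the auxiliary function. You correctly identify that the delicate case in Theorem~\ref{thm:main_example} is a parameter where some $u_j'(s_0)=0$, and that the conclusion $(f_j\circ u_j)'(s_0)=0$ rests on $|u_j(s)-u_j(s_0)|\le\rho|s-s_0|^{1+\gamma}$ together with $\alpha_j(1+\gamma)>1$. For a curve that is only $C^1$ there is no $\gamma>0$ available: one gets only $|u_j(s)-u_j(s_0)|=o(|s-s_0|)$, hence $|f_j(u_j(s))-f_j(u_j(s_0))|=o(|s-s_0|^{\alpha_j})$ with $\alpha_j<1$, which does not force differentiability of $f_j\circ u_j$ at $s_0$ and does not prevent this summand from destroying the precise $C^{0,\alpha}$ lower bound needed for the contradiction. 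Indeed, even the existence of a single continuous function nowhere differentiable along every $C^1$ unit speed curve is itself the paper's open Conjecture~\ref{conj:1}. So your scheme proves the Main Theorem but leaves Conjecture~\ref{conj:2} untouched.
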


If
the first  conjecture holds, it would be nice if the second one
followed by  arguments similar to the ones used to prove our Main Theorem.
But our proof uses in an essential way the compactness result of Lemma
\ref{lem:C_nc_cpt}. In our case the compactness follows from 
the  Arzel\`a–Ascoli Theorem and the assumption the derivatives
of the test curves satisfy a H\"older condition.  If we are using
as the set of test curves all $C^1$ unit speed curves, a direct adaptation 
of our methods would require  that the set of all unit speed
curves $c\cn [-1/n,1/n]\to U$ is a countable union of compact sets
in the $C^1$ topology, which is false.
Thus, even if Conjecture \ref{conj:1} holds,
new ideas are  needed to prove Conjecture~\ref{conj:2}.

\section{Appendix: Katzourakis' example}

The
first person to show a function having a property similar to
being \adj  $C^{0,\alpha}$ was Hardy in \cite[Theorem~1.32]{Hardy:Reim_fcn}
where he shows the Weierstrass function
$W(x) = \sum_{n=1}^\infty a^n \cos(b^n\pi x)$
with $0<a<1$, $ab>1$, and $\alpha = \ln(1/a)/\ln(b)$ 
satisfies the pointwise estimates
$$
0 < \limsup_{y\to x} \frac{|W(x) - W(y)|}{|x-y|^\alpha} < \infty.
$$
However his proof does not seem to show this inequality
holds uniformly in~$x$ and thus   does not show
that $W$ is $\alpha$-H\"older on $\R$.  
Hardy's proof proceeds by extending~$W$ to a  harmonic function 
on the upper half plane $\{ x+iy: y\ge 0\}$  and   using methods
and results from harmonic analysis.

The first examples of  \adj $C^{0,\alpha} $ functions were
given by E.\ I.\ Berezhno\u{i} \cite{Berez:nonsmooth}. 
He uses a modulus of continuity $\omega$ more general than
the H\"older case $\omega(t) = t^\alpha$ and  uses rather
abstract methods from Banach space theory.
In  \cite{Kat:Holder},   
Nikolaos I.\  Katzourakis constructs  concrete  examples
of functions $K_{\alpha, \nu}$ that are \adj $C^{0,\alpha}$. 
We give a variant of his construction.
We get a larger class of functions
(we only require that $b$ in the definition \eqref{eq:Phi_def}
is an even integer with $b^{1-\alpha}>2$ while 
Katzourakis requires  $b=2^{2\nu}$ 
for  a sufficiently large integer $\nu$). 
Our elementary Lemma \ref{lem:a<x<b} 
lets us simplify the proof a bit
by only having to look at difference quotients on intervals of
the form $[b^{-m}j,b^{-m}(j+1)]$.
However the main reason our construction  
is shorter is that  Katzourakis proves his functions also
satisfy the stronger   ``Hardy condition''
\begin{equation*}
0< \limsup_{y\to x} \frac{|K_{\alpha,\nu}(y)- K_{\alpha,\nu}(x)|}{|y-x|^\alpha}
 < \infty
\end{equation*}
for each $x\in \R$.
We only show the less precise lower bound of~\eqref{eq:b-infty}.
 Proving Katzourakis' more precise result 
requires a substantial amount of extra work.  We include this example 
for completeness and because it  requires less machinery 
than any example we have seen in the literature.

Define a sawtooth function  $\phi\cn \R \to \R$ by
$$
\phi(x) := \dist(x, 2\Z).
$$
Then $\phi$ is continuous,   even, 2-periodic, and  satisfies
for each $x,y\in\R$
$$
0\le \phi(x) \le 1
$$
and
\begin{equation}\label{eq:phi_lip}
|\phi(x)-\phi(y)|\le |x-y|.
\end{equation}
For $0<\alpha < 1$
and an   even positive integer $b$,
define 
\begin{equation}\label{eq:Phi_def}
\Phi(x) := \sum_{k=0}^\infty b^{-k\alpha}\phi(b^k x).
\end{equation}
Then $\Phi$ is continuous,  even, 2-periodic, and thus bounded.

\begin{thm}\label{thm:Kat}
  The function $\Phi$ is \adj $C^{0,\alpha}$ on $\R$
  when  $0<\alpha<1$, the number $b$ is an even positive integer,
  and  $b^{1-\alpha}>2$. 
\end{thm}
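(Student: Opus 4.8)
The goal is to establish two things about $\Phi(x) = \sum_{k=0}^\infty b^{-k\alpha}\phi(b^k x)$: first, the global upper H\"older bound $|\Phi(x)-\Phi(y)| \le C|x-y|^\alpha$ for all $x,y$; second, for every $\beta>\alpha$ and every $y$, that $\limsup_{x\to y}|\Phi(x)-\Phi(y)|/|x-y|^\beta = \infty$. The upper bound is the routine half. For any $x,y$ with $|x-y|$ small, choose the integer $m \ge 0$ with $b^{-(m+1)} \le |x-y| < b^{-m}$. For the terms $k < m$, use that $\phi(b^k\cdot)$ is Lipschitz with constant $b^k$ via \eqref{eq:phi_lip}, so $|b^{-k\alpha}(\phi(b^k x)-\phi(b^k y))| \le b^{k(1-\alpha)}|x-y|$, and sum the geometric series (ratio $b^{1-\alpha}>1$), getting a bound $\asymp b^{m(1-\alpha)}|x-y| \le b^{-m\alpha} \asymp |x-y|^\alpha$. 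For the tail $k \ge m$, bound each term crudely by $b^{-k\alpha}\cdot 1$ (since $0\le\phi\le 1$) and sum: $\sum_{k\ge m} b^{-k\alpha} \asymp b^{-m\alpha} \asymp |x-y|^\alpha$. Adding the two pieces gives the global $\alpha$-H\"older estimate with a constant $C = C(\alpha,b)$.

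The interesting half is the lower bound, and here the hypothesis $b^{1-\alpha} > 2$ (not merely $>1$) is essential. Fix $y \in \R$ and $\beta > \alpha$. The plan is to produce, for each large $m$, a point $x = x_m$ with $|x_m - y| \asymp b^{-m}$ along which $|\Phi(x_m)-\Phi(y)|$ is bounded below by a constant times $b^{-m\alpha}$; since $b^{-m\alpha}/b^{-m\beta} = b^{m(\beta-\alpha)}\to\infty$, this forces the $\limsup$ to be infinite. To build $x_m$, I want to exploit the (here I expect to invoke or prove) elementary fact — the paper's Lemma \ref{lem:a<x<b} is advertised for exactly this reduction — that it suffices to examine difference quotients across a dyadic-type interval $[b^{-m}j, b^{-m}(j+1)]$ containing $y$. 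On such an interval, the terms $k \ge m$ of the series each have $\phi(b^k\cdot)$ completing whole periods or simple linear pieces; the key observation is that one can choose the endpoint $x_m$ so that the $k=m$ term contributes a definite jump of size $\asymp b^{-m\alpha}$ in $\Phi$ (the sawtooth $\phi(b^m x)$ moves by a definite amount, say $\ge 1/2$, times $b^{-m\alpha}$), while the terms $k<m$ — being Lipschitz with the geometric bound from the upper estimate applied on this short interval — contribute at most $\sum_{k<m} b^{k(1-\alpha)}\cdot b^{-m} \asymp b^{-m}\cdot b^{m(1-\alpha)} = b^{-m\alpha}\cdot b^{-m\cdot 0}$... more carefully, at most $\frac{b^{(m)(1-\alpha)}}{b^{1-\alpha}-1}b^{-m}$, and the tail terms $k>m$ contribute at most $\sum_{k>m}b^{-k\alpha} = \frac{b^{-\alpha}}{1-b^{-\alpha}}b^{-m\alpha}$. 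Choosing the interval and the point $x_m$ carefully so that the main $k=m$ term and the error terms have the right signs/sizes, and using $b^{1-\alpha}>2$ to guarantee the Lipschitz (low-frequency) error is strictly smaller than the main jump, yields $|\Phi(x_m)-\Phi(y)| \ge c\,b^{-m\alpha}$ for an absolute $c>0$.

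The main obstacle — and the step requiring genuine care rather than bookkeeping — is the combinatorial choice of the point $x_m$ near $y$ that simultaneously (i) makes the $m$-th sawtooth term move by a controlled positive amount of the full scale $b^{-m\alpha}$, and (ii) keeps the cumulative contribution of the lower-frequency terms $k < m$ from canceling it. This is where one must use that $b$ is an even integer (so the periods of $\phi(b^k\cdot)$ nest compatibly and $b^{-m}j$ can be taken to land at a vertex or midpoint of the $\phi(b^m\cdot)$ sawtooth) and where $b^{1-\alpha}>2$ does its work: it makes the geometric sum of low-frequency slopes, which grows like $b^{m(1-\alpha)}$ over an interval of length $b^{-m}$, small relative to the single main term by a factor with ratio $2/b^{1-\alpha} < 1$. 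I would handle (i)–(ii) by first reducing via Lemma \ref{lem:a<x<b} to one canonical subinterval, then splitting into the cases according to where $y$ sits relative to the breakpoints of $\phi(b^m\cdot)$, and in each case exhibiting $x_m$ explicitly as one of the two endpoints of a sub-subinterval. Once the lower bound $|\Phi(x_m)-\Phi(y)|\ge c\,b^{-m\alpha}$ with $|x_m-y|\le b^{-m}$ is in hand, dividing by $|x_m-y|^\beta \le b^{-m\beta}$ and letting $m\to\infty$ completes the proof that $\Phi$ is \adj $C^{0,\alpha}$ on $\R$.
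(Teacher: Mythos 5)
Your overall architecture matches the paper's: the upper bound is the standard geometric-series split at the scale $m$ with $b^{-m}\asymp|x-y|$, and the lower bound is obtained by reducing, via Lemma \ref{lem:a<x<b}, to the difference of $\Phi$ across an interval $[b^{-m}j,b^{-m}(j+1)]$ containing $y$, isolating the $k=m$ term as the main contribution and using $b^{1-\alpha}>2$ to dominate the low-frequency terms $k<m$. But there is a genuine gap in your treatment of the high-frequency tail $k>m$. You bound it by $\sum_{k>m}b^{-k\alpha}=\frac{b^{-\alpha}}{1-b^{-\alpha}}\,b^{-m\alpha}$, which is of the same order $b^{-m\alpha}$ as the main term, and its constant need not be small: whenever $b^{\alpha}\le 2$ (for instance $b=4$, $\alpha=2/5$, which satisfies $b^{1-\alpha}=4^{3/5}>2$) one has $\frac{b^{-\alpha}}{1-b^{-\alpha}}\ge 1$, so this crude tail estimate swallows the main term entirely. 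The hypothesis $b^{1-\alpha}>2$ controls only the $k<m$ sum and gives no leverage against this tail, so the argument as written does not close.

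The missing idea --- and the actual reason $b$ must be an even integer --- is that the tail should not be estimated at all: it vanishes identically at the chosen endpoints. For $k>m$ the number $b^{k-m}$ is an even integer, hence $b^k\cdot b^{-m}j=b^{k-m}j\in 2\Z$ and $\phi(b^k b^{-m}j)=0$, and likewise at $b^{-m}(j+1)$; so every term with $k>m$ contributes exactly $0$ to $\Phi(b^{-m}(j+1))-\Phi(b^{-m}j)$. With the tail gone, the reverse triangle inequality gives
\begin{equation*}
|\Phi(b^{-m}(j+1))-\Phi(b^{-m}j)|\ \ge\ b^{-m\alpha}\left(1-\frac{1}{b^{1-\alpha}-1}\right),
\end{equation*}
which is positive precisely because $b^{1-\alpha}>2$, and this holds for every $j$ --- no case analysis on where $y$ sits relative to the breakpoints, and no sign bookkeeping, is needed. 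Once you replace your tail estimate with this exact cancellation, the rest of your plan (comparing with $2Cb^{-m\beta}$ from Lemma \ref{lem:a<x<b} and letting $m\to\infty$) goes through.
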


Theorem \ref{thm:Kat}  follows from
Proposition \ref{prop:C-alpha} and Proposition \ref{prop:sup=infty}.

\begin{prop}\label{prop:C-alpha} 
  The function $\Phi$ is in $C^{0,\alpha}(\R)$ when $0<\alpha<1$
  and the  $b$ is an even positive integer.
\end{prop}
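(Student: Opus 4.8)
The approach is the standard frequency-splitting estimate for lacunary series. Fix $x\ne y$ and set $h:=|x-y|$. When $h>1$ I would simply use boundedness: since $0\le\phi\le1$ we have $\|\Phi\|_{\infty}\le\sum_{k=0}^{\infty}b^{-k\alpha}=(1-b^{-\alpha})^{-1}$, hence $|\Phi(x)-\Phi(y)|\le 2(1-b^{-\alpha})^{-1}\le 2(1-b^{-\alpha})^{-1}h^{\alpha}$. So the substance of the proof is the range $0<h\le1$.

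For $0<h\le1$, let $m\ge0$ be the unique integer with $b^{-(m+1)}<h\le b^{-m}$ and split the series \eqref{eq:Phi_def} into the low-frequency block $\sum_{k=0}^{m}$ and the high-frequency tail $\sum_{k>m}$. In the low block each $k\le m$ satisfies $b^{k}h\le b^{m}h\le1$, so the Lipschitz bound \eqref{eq:phi_lip} gives $|\phi(b^{k}x)-\phi(b^{k}y)|\le b^{k}h$, and the block is at most
\[
  h\sum_{k=0}^{m}b^{k(1-\alpha)}\ \le\ h\cdot b^{m(1-\alpha)}\cdot\frac{b^{1-\alpha}}{b^{1-\alpha}-1}\ \le\ \frac{b^{1-\alpha}}{b^{1-\alpha}-1}\,h^{\alpha},
\]
where the last step uses $b^{m}\le1/h$. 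In the tail I would only use $|\phi(b^{k}x)-\phi(b^{k}y)|\le1$, so the tail is at most $\sum_{k>m}b^{-k\alpha}=b^{-(m+1)\alpha}(1-b^{-\alpha})^{-1}\le h^{\alpha}(1-b^{-\alpha})^{-1}$, using $b^{-(m+1)}<h$. Adding the two pieces yields $|\Phi(x)-\Phi(y)|\le C\,|x-y|^{\alpha}$ with $C=\dfrac{b^{1-\alpha}}{b^{1-\alpha}-1}+\dfrac{1}{1-b^{-\alpha}}$, which is finite because $b\ge2$ and $0<\alpha<1$ force $b^{1-\alpha}>1$.

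There is no genuine obstacle here; the two points that need care are that the split index $m$ is well defined (which is why the trivial case $h>1$ is peeled off first) and that one keeps the one-sided inequalities $b^{-(m+1)}<h\le b^{-m}$ pointed the right way when trading powers of $h$ for powers of $h^{\alpha}$. Note that the argument uses only $b^{1-\alpha}>1$, not the full hypothesis $b^{1-\alpha}>2$ of Theorem~\ref{thm:Kat}; the stronger inequality is needed only for the complementary lower bound, Proposition~\ref{prop:sup=infty}.
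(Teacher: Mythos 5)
Your proof is correct and is essentially the same argument as the paper's: split the series at the frequency scale $b^{m}\approx 1/|x-y|$, use the Lipschitz bound \eqref{eq:phi_lip} on the low-frequency block and $0\le\phi\le 1$ on the tail, and sum two geometric series. The only (cosmetic) difference is that you dispose of the case $|x-y|>1$ by boundedness, whereas the paper reduces to $x,y\in[0,1]$ via evenness and $2$-periodicity; your closing observation that only $b^{1-\alpha}>1$ is used here is also accurate.
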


\begin{proof}
  Since $\Phi$ is even and 2-periodic,
   it is enough to show $\Phi$ is  
   H\"older continuous with exponent $\alpha$
   on the interval $[0,1]$.  Let $x,y\in [0,1]$.
    Without loss of generality we may assume $x\ne y$.  
    Let $m$ be a positive integer with
    $$
    b^{-m} < |x-y| \le b^{-m+1}.
    $$
    As $\phi$ takes its values in $[0,1]$  we have
    $|\phi(b^kx)-\phi(b^ky)|\le 1$. Using this,
    the inequality \eqref{eq:phi_lip}, and the triangle inequality
    \begin{align}\nonumber
        | \Phi(x) - \Phi(y)|&\le \sum_{k=0}^{m} b^{-k\alpha}|\phi(b^kx) - \phi(b^ky)|\\
            &\qquad\quad  +\sum_{k=m+1}^\infty b^{-k\alpha}|\phi(b^kx) - \phi(b^ky)|\nonumber\\
            &\le \sum_{k=-\infty}^m b^{k(1-\alpha)}|x-y| + \sum_{k=m+1}^\infty b^{-k\alpha}\nonumber\\
        \label{eq:geo}           &= \frac{b^{m(1-\alpha)}}{1-b^{-(1-\alpha)}} |x-y| + \frac{b^{-(m+1)\alpha}}{1-b^{-\alpha}}\\
                                 &= \frac{b^{m(1-\alpha)}|x-y|^{1-\alpha}}{1-b^{-(1-\alpha)}} |x-y|^\alpha + \frac{b^{-m\alpha}b^{-\alpha}}{1-b^{-\alpha} }\nonumber
\end{align}
where the equality \eqref{eq:geo} comes from summing the two geometric series.
As $|x-y|\le b^{-m+1}$ 
$$
b^{m(1-\alpha)}|x-y|^{1-\alpha}\le b^{m(1-\alpha)}( b^{-m+1})^{1-\alpha}
= b^{1-\alpha}.
$$
The inequality $b^{-m}\le |x-y|$ implies
$$
b^{-m\alpha}\le |x-y|^\alpha.
$$
Using these inequalities in the estimate for $|\Phi(x)-\Phi(y)|$
gives
\begin{align*}
    |\Phi(x)-\Phi(y)|&\le  \frac{b^{1-\alpha}}{1-b^{-(1-\alpha)}} |x-y|^\alpha + \frac{b^{-\alpha}}{1-b^{-\alpha} }|x-y|^\alpha\\
                      &= C_\alpha |x-y|^\alpha
\end{align*}
where 
$$
C_\alpha =  \frac{b^{1-\alpha}}{1-b^{-(1-\alpha)}} +  \frac{b^{-\alpha}}{1-b^{-\alpha} }~,
$$
which shows $\Phi$ is H\"older continuous on with exponent $\alpha$ on $[0,1]$.
\end{proof}

\begin{lemma}\label{lem:a<x<b}
    Let $f\cn \R \to \R$ and let $x\in \R$.  Assume for some
    $\beta\in (0,1]$ there is a constant $C$ with
    $$
    |f(x)-f(y)| \le C |x-y|^\beta
    $$
    for  all $y\in \R$.  Let $x_0\le x \le x_1$.  Then
    $$
    |f(x_1)-f(x_0)| \le 2C |x_1-x_0|^\beta.
    $$
\end{lemma}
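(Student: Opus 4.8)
The plan is to reduce the two-point estimate on $[x_0,x_1]$ to two applications of the given one-sided H\"older bound centered at $x$, joined by the triangle inequality. First I would write
\begin{equation*}
|f(x_1) - f(x_0)| \le |f(x_1) - f(x)| + |f(x) - f(x_0)|,
\end{equation*}
which is valid for any real numbers. Then, applying the hypothesis $|f(x) - f(y)| \le C|x-y|^\beta$ with $y = x_1$ and with $y = x_0$ in turn, I would bound the right-hand side by $C|x_1 - x|^\beta + C|x - x_0|^\beta$.

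Next I would use the ordering $x_0 \le x \le x_1$. This gives $0 \le x_1 - x \le x_1 - x_0$ and $0 \le x - x_0 \le x_1 - x_0$, so in particular $|x_1 - x| \le |x_1 - x_0|$ and $|x - x_0| \le |x_1 - x_0|$. Since $\beta > 0$, the map $t \mapsto t^\beta$ is nondecreasing on $[0,\infty)$, hence $|x_1 - x|^\beta \le |x_1 - x_0|^\beta$ and $|x - x_0|^\beta \le |x_1 - x_0|^\beta$. Substituting these into the previous display yields
\begin{equation*}
|f(x_1) - f(x_0)| \le C|x_1 - x_0|^\beta + C|x_1 - x_0|^\beta = 2C|x_1 - x_0|^\beta,
\end{equation*}
as claimed.

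There is no real obstacle here; the only point requiring the hypotheses $\beta \in (0,1]$ (in fact only $\beta > 0$ is used) and $x_0 \le x \le x_1$ is the monotonicity of $t \mapsto t^\beta$ together with the fact that both subintervals $[x_0,x]$ and $[x,x_1]$ sit inside $[x_0,x_1]$. The factor $2$ is exactly the cost of passing through the intermediate point $x$, and it cannot in general be improved, since $x$ need not be an endpoint of $[x_0,x_1]$.
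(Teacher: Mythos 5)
Your proof is correct and is essentially identical to the paper's: the triangle inequality through the intermediate point $x$, the hypothesis applied at $y=x_0$ and $y=x_1$, and then bounding each of $|x_1-x|^\beta$ and $|x-x_0|^\beta$ by $|x_1-x_0|^\beta$. You simply spell out the monotonicity step that the paper leaves implicit.
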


\begin{proof}
    \begin{align*}
    |f(x_1) - f(x_0)|&\le |f(x_1)-f(x)|+ |f(x)-f(x_0)|\\ 
                 &\le C|x_1-x|^\beta + C |x-x_0|^\beta \le 2C|x_1-x_0|^\beta.
    \end{align*}
\end{proof}

\begin{prop}\label{prop:sup=infty}
  Let $\Phi$ be as in \eqref{eq:Phi_def} so  $0<\alpha<1$ and the 
  $b$ is an even positive integer.  
  Assume furthermore that  $b^{1-\alpha}>2$. 
  Let  $\beta > \alpha$.   Then for each $x\in\R$ 
$$
\limsup_{y\to x} \frac{|\Phi(x)-\Phi(y)|}{|x-y|^\beta} = \infty.
$$
\end{prop}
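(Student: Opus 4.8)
The goal is to show that for every $x\in\R$ and every $\beta>\alpha$, the difference quotient with exponent $\beta$ has $\limsup$ equal to $\infty$ at $x$. By periodicity and evenness of $\Phi$, it suffices to handle $x\in[0,1]$. The core idea is the standard Weierstrass-type trick: probe $x$ with points of the form $y = x \pm b^{-m}/2$ (or some fixed fraction of $b^{-m}$), chosen so that the $m$-th term $b^{-m\alpha}\phi(b^m x)$ changes by a definite amount proportional to $b^{-m\alpha}\cdot b^{-m}\cdot b^m$... wait, more precisely: the sawtooth $\phi$ has slope $\pm 1$, so moving $x$ by $\delta_m \sim b^{-m}$ changes $\phi(b^m x)$ by roughly $b^m\delta_m \sim 1$, hence changes the $m$-th summand by roughly $b^{-m\alpha}$. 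The tail terms $k>m$ are controlled: since $\phi$ is bounded by $1$, their total contribution is $\le \sum_{k>m} b^{-k\alpha} = O(b^{-(m+1)\alpha})$, which is smaller than $b^{-m\alpha}$ by a factor $b^{-\alpha}<1$. The head terms $k<m$ are the delicate ones: moving by $\delta_m$ changes $\phi(b^k x)$ by at most $b^k\delta_m$, so the head contributes at most $\sum_{k=0}^{m-1} b^{-k\alpha} b^k \delta_m = \delta_m \sum_{k=0}^{m-1} b^{k(1-\alpha)} = O(\delta_m b^{m(1-\alpha)}) = O(b^{-m}\cdot b^{m(1-\alpha)}) = O(b^{-m\alpha})$. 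So all three pieces are $O(b^{-m\alpha})$, and we need the $m$-th term's genuine change to dominate.

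\textbf{Making the head term harmless.} The subtle point is that the head bound $O(b^{-m\alpha})$ has a constant roughly $1/(b^{1-\alpha}-1)$ times the per-step slope $1$, and we must ensure this is strictly smaller than the amount the $m$-th term moves. This is exactly where the hypothesis $b^{1-\alpha}>2$ enters. I would proceed as follows. Fix $x$ and a large integer $m$. Consider the two points $x' = b^{-m}j'$ and $x'' = b^{-m}j''$ where $j',j''$ are the two integers (or half-integers, scaled appropriately — I'd use the structure that $\phi$ is piecewise linear with breakpoints at even integers) bracketing $b^m x$, i.e., the nearest points of the form $b^{-m}\cdot(\text{integer})$ on either side, refined so that on $[b^{-m}j, b^{-m}(j+1)]$ the function $\phi(b^m \cdot)$ is linear with slope $\pm b^m$. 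On such a dyadic-type interval $[b^{-m}j, b^{-m}(j+1)]$ of length $b^{-m}$, for every $k\le m$ the function $x\mapsto \phi(b^k x)$ is \emph{linear} (its breakpoints are at $b^{-k}\cdot 2\Z \supseteq b^{-m}\Z$ refined... actually breakpoints of $\phi(b^k\cdot)$ are at points where $b^k x\in\Z$, i.e. $x\in b^{-k}\Z$, and since $k\le m$ these are spaced $\ge b^{-m}$ apart, hence $\phi(b^k\cdot)$ is affine on the whole interval). This is precisely the content of Lemma~\ref{lem:a<x<b}'s utility as advertised in the appendix text: we only look at difference quotients on intervals $[b^{-m}j, b^{-m}(j+1)]$.

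\textbf{Executing the estimate.} On the interval $[b^{-m}j, b^{-m}(j+1)]$, write $\Phi(b^{-m}(j+1)) - \Phi(b^{-m}j) = \sum_{k=0}^{m} b^{-k\alpha}\big(\phi(b^k b^{-m}(j+1)) - \phi(b^k b^{-m}j)\big)$ plus a tail; each head/middle difference ($k\le m$) equals $(\pm 1)\cdot b^{k}\cdot b^{-m}$ with the sign being $+$ or $-$ depending on where we are in the sawtooth, \emph{except} the $k=m$ term, whose argument moves by exactly $1$, so $\phi(b^m\cdot)$ moves by... here I'd choose the sub-interval more carefully (length $b^{-m}/2$, from a breakpoint to the adjacent peak of the $k=m$ sawtooth) so that the $k=m$ term changes by exactly $b^{-m\alpha}\cdot(1/2)\cdot$... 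Let me instead follow the cleanest route: for the chosen $m$, pick the point $x_m \in \{b^{-m}j\}$ that is a breakpoint of $\phi(b^m\cdot)$ closest to $x$ on one side, and $y_m$ the adjacent breakpoint; so $\phi(b^m x_m)=0$, $\phi(b^m y_m)=1$ (or vice versa) — no wait, adjacent breakpoints of $\phi$ are distance $1$ apart in the argument hence $b^{-m}$ apart in $x$, giving $|\phi(b^m x_m)-\phi(b^m y_m)|=1$. Then the $k=m$ term contributes $b^{-m\alpha}$ to the difference; for $k<m$ the contribution is $\pm b^{-k\alpha}b^{k}b^{-m} = \pm b^{-m}b^{k(1-\alpha)}$; and the tail $k>m$ contributes at most $\sum_{k>m}b^{-k\alpha} = \frac{b^{-(m+1)\alpha}}{1-b^{-\alpha}}$. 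Hence by the reverse triangle inequality
\[
|\Phi(y_m)-\Phi(x_m)| \ge b^{-m\alpha} - b^{-m}\sum_{k=0}^{m-1}b^{k(1-\alpha)} - \frac{b^{-(m+1)\alpha}}{1-b^{-\alpha}}.
\]
Now $b^{-m}\sum_{k=0}^{m-1}b^{k(1-\alpha)} = b^{-m}\cdot\frac{b^{m(1-\alpha)}-1}{b^{1-\alpha}-1} \le \frac{b^{-m\alpha}}{b^{1-\alpha}-1}$, which is $< b^{-m\alpha}/1$ precisely because $b^{1-\alpha}>2$ gives $b^{1-\alpha}-1>1$; and the tail is $\le \frac{b^{-m\alpha}b^{-\alpha}}{1-b^{-\alpha}}$. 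Choosing $m$ large won't make these die relative to $b^{-m\alpha}$, so instead I need the \emph{combined} subtracted constant $\frac{1}{b^{1-\alpha}-1} + \frac{b^{-\alpha}}{1-b^{-\alpha}}$ to be $<1$. This requires a bit more than just $b^{1-\alpha}>2$ unless the sawtooth-slope constant is sharper. The honest fix (and I expect this is what the authors do): on the chosen length-$b^{-m}$ interval, the head-term signs can be arranged — or simply bounded by noting the head contribution has \emph{a definite sign} that may \emph{add to} rather than subtract from the $m$-th term for a suitable choice of left-vs-right breakpoint, or else one uses that we get to choose $x_m, y_m$ among \emph{two} candidate adjacent-breakpoint pairs (one on each side of $x$) and at least one gives reinforcement. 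Alternatively, and most robustly, replace the single term $k=m$ by summing the \emph{last few} terms $k = m, m+1, \dots, m+N$ over a cleverly nested sequence — but the cleanest is: since $b^{1-\alpha}>2$, we have $b^{-m}\sum_{k=0}^{m-1}b^{k(1-\alpha)} \le \frac{b^{-m\alpha}}{b^{1-\alpha}-1} < b^{-m\alpha}$, and absorbing the tail for $b$ large... Actually the truly clean statement: one shows $|\Phi(y_m)-\Phi(x_m)| \ge c\, b^{-m\alpha}$ for a constant $c=c(\alpha,b)>0$ and all large $m$ (possibly after choosing the better of the two adjacent intervals), and since $|y_m - x_m| = b^{-m} \to 0$ while $b^{-m\alpha}/b^{-m\beta} = b^{m(\beta-\alpha)}\to\infty$, we get
\[
\frac{|\Phi(y_m)-\Phi(x_m)|}{|y_m-x_m|^\beta} \ge c\, b^{-m\alpha}\cdot b^{m\beta} = c\, b^{m(\beta-\alpha)} \xrightarrow[m\to\infty]{} \infty,
\]
and since both $x_m\to x$ and $y_m\to x$, taking (say) $y=y_m$ and comparing to $x$ directly via Lemma~\ref{lem:a<x<b} (which says $|\Phi(x)-\Phi(y_m)| $ and the on-interval difference $|\Phi(y_m)-\Phi(x_m)|$ differ only by a bounded factor, so one of the two base points $x_m,y_m$ gives a large quotient against $x$) yields $\limsup_{y\to x}|\Phi(x)-\Phi(y)|/|x-y|^\beta = \infty$.

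\textbf{Main obstacle.} The single genuinely delicate point is the head-term bound: controlling $\sum_{k=0}^{m-1} b^{-k\alpha}|\phi(b^k y_m) - \phi(b^k x_m)|$ and showing it does not cancel the definite change $\sim b^{-m\alpha}$ coming from the $k=m$ term. This is exactly where $b^{1-\alpha}>2$ is used, and the trick is to restrict to intervals $[b^{-m}j, b^{-m}(j+1)]$ where every $\phi(b^k\cdot)$, $k\le m$, is affine (so the head differences are \emph{linear}, of size exactly $b^{k-m}$), and then to either pick the orientation giving reinforcement or simply note $\frac{1}{b^{1-\alpha}-1}<1$. Everything else — periodicity reduction, the tail geometric sum, and the final $\limsup$ argument via $b^{m(\beta-\alpha)}\to\infty$ together with Lemma~\ref{lem:a<x<b} to transfer from the interval endpoints to the fixed point $x$ — is routine.
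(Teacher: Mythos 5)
Your overall skeleton is the right one and matches the paper's: work on the grid intervals $[b^{-m}j,\,b^{-m}(j+1)]$, extract a definite change of size $\sim b^{-m\alpha}$ from the $k=m$ term, bound the head terms $k<m$ by the geometric sum $b^{-m}\sum_{k<m}b^{k(1-\alpha)}\le b^{-m\alpha}/(b^{1-\alpha}-1)$, and then transfer the resulting lower bound $\gtrsim b^{-m\alpha}$ to the fixed point $x$ via Lemma~\ref{lem:a<x<b} and the contradiction $M b^{m(\beta-\alpha)}\le 2C$. But there is a genuine gap at exactly the point you flag yourself: you treat the tail $k>m$ as an error term of size $\sum_{k>m}b^{-k\alpha}=\frac{b^{-\alpha}}{1-b^{-\alpha}}b^{-m\alpha}$, observe (correctly) that the combined subtracted constant $\frac{1}{b^{1-\alpha}-1}+\frac{b^{-\alpha}}{1-b^{-\alpha}}$ need not be less than $1$ under the sole hypothesis $b^{1-\alpha}>2$, and then propose several unexecuted repairs (sign reinforcement, choosing the better of two adjacent intervals, summing the last few terms). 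None of these is carried out, so as written the lower bound $|\Phi(y_m)-\Phi(x_m)|\ge c\,b^{-m\alpha}$ is not established.

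The missing observation — and the reason the statement assumes $b$ is an \emph{even} integer, a hypothesis you never use — is that the tail does not merely become small at the grid points: it vanishes identically. For $k>m$ one has $b^k\cdot b^{-m}j=b^{k-m}j$ with $b^{k-m}$ an even integer, and since $\phi$ is $2$-periodic,
\[
\phi\bigl(b^{k}b^{-m}(j+1)\bigr)-\phi\bigl(b^{k}b^{-m}j\bigr)
=\phi\bigl(b^{k-m}j+b^{k-m}\bigr)-\phi\bigl(b^{k-m}j\bigr)=0 .
\]
Hence $\Phi(b^{-m}(j+1))-\Phi(b^{-m}j)$ is a \emph{finite} sum over $0\le k\le m$, the only competitor to the $k=m$ term $\pm b^{-m\alpha}$ is the head sum, and the reverse triangle inequality gives
$|\Phi(b^{-m}(j+1))-\Phi(b^{-m}j)|\ge b^{-m\alpha}\bigl(1-\tfrac{1}{b^{1-\alpha}-1}\bigr)$,
which is positive precisely because $b^{1-\alpha}>2$. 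With this in hand the rest of your argument (the global H\"older-$\beta$ bound at $x$ obtained from the assumed finite $\limsup$ plus boundedness of $\Phi$, Lemma~\ref{lem:a<x<b} applied to the bracketing interval, and $b^{m(\beta-\alpha)}\to\infty$) goes through exactly as you sketch it and is the paper's proof.
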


\begin{proof}
  Let $m$ be a positive integer.
  Let $j$ and $k$ be nonnegative integers.
  As $\phi$ is periodic with period $2$
  when  $k>m$ the power $b^{k-m}$ is an even integer (as $b$ is even)
  and so
$$
\phi(b^k(b^{-m}(j+1))) - \phi(b^k(b^{-m}j))
= \phi(b^{k-m}j + b^{k-m}) - \phi(b^{k-m}j) =0.
$$
Therefore the sum for $\Phi(b^{-m}(j+1))- \Phi(b^{-m}j)$ reduces to a finite
sum
\begin{align*}
  \Phi(b^{-m}(j+1))&- \Phi(b^{-m}j)
  \\
   &=
  \sum_{k=0}^m b^{-k\alpha}\left[\phi(b^k(b^{-m}(j+1))) - \phi(b^k(b^{-m}j))\right]\\
     &= b^{-m\alpha}\left[ \phi(j+1) - \phi(j)\right]\\
  &\qquad\qquad+ \sum_{k=0}^{m-1} b^{-k\alpha}
     \left[  \phi(b^{k-m}j + b^{k-m}) - \phi(b^{k-m}j)    \right]. 
\end{align*}
But $\phi(j+1) - \phi(j) = \pm 1$ for all $j$.
As  $\phi$ is Lipschitz with
Lipschitz constant one, 
$|\phi(b^{k-m}j + b^{k-m}) - \phi(b^{k-m}j)   |  \le  b^{k-m}$.
Putting this together with reverse triangle inequalitygives

\begingroup
\allowdisplaybreaks
\begin{align*}
    |\Phi(b^{-m}(j+1))&- \Phi(b^{-m}j)|\\
 &\ge  b^{-\alpha m} - \sum_{k=0}^{m-1} b^{-\alpha k} | \phi(b^{k-m}j + b^{k-m}) - \phi(b^{k-m}j)  |\\
 &\ge b^{-\alpha m} - \sum_{k=0}^{m-1} b^{-k\alpha} b^{k-m}\\
&= b^{-\alpha m} - b^{-m} \sum_{k=0}^{m-1} b^{k(1-\alpha)} \\
&= b^{-\alpha m} - b^{-m}                    
           \frac{1 - b^{m(1-\alpha)}}{1 - b^{1-\alpha}} \\
&= b^{-\alpha m} \left( 1 - b^{-m + \alpha m}\,  
                \frac{1 - b^{m(1-\alpha)}}{1 - b^{1-\alpha}}\right) \\
&= b^{-\alpha m} \left( 1 - b^{-m(1-\alpha)}\,  
                \frac{ b^{m(1-\alpha)}-1 }{ b^{1-\alpha}-1}\right) \\
&= b^{-\alpha m} \left( 1 - \frac{1 - b^{-m(1-\alpha)} }{ b^{1-\alpha}-1}\right)\\
&\ge b^{-\alpha m} \left( 1 - \frac{1  }{ b^{1-\alpha}-1}\right). 
\end{align*}%
\endgroup
Thus
\begin{equation}\label{eq:Phi-diff-lower}
    |\Phi(b^{-m}(j+1))- \Phi(b^{-m}j)| \ge M_\alpha b^{-\alpha m} 
\end{equation}
where
$$
M_\alpha =  1 - \frac{1  }{ b^{1-\alpha}-1}.
$$
When $b^{1-\alpha}>2$ 
$$
    M_\alpha >0.
$$

Towards a contradiction assume there is a $x\in \R$ with
$$
\limsup_{y\to x} \frac{|\Phi(y) - \Phi(x)|}{|y-x|^\beta} <\infty.
$$
By the periodicity of $\Phi$ we can assume $x \geq 0$.
As $\Phi$ is bounded
this implies there is a $C>0$ so that
$$
|\Phi(y)-\Phi(x)|\le C |x-y|^\beta
$$
holds for all $y\in \R$.
For each positive integer $m$ choose a nonnegative integer  $j$ such
that 
$$
b^{-m} j \le x < b^{-m}(j+1).
$$
Letting $x_0=b^{-m} j $ and $x_1 = b^{-m}(j+1)$ in Lemma \ref{lem:a<x<b} 
$$
|\Phi(x_1) - \Phi(x_0)| \le 2 C |x_1- x_0|^\beta = 2C b^{-\beta m}.
$$
Combining this with the inequality \eqref{eq:Phi-diff-lower} gives 
$$
M_\alpha b^{-\alpha m}\le  |\Phi(x_1) - \Phi(x_0)| \le 2C b^{-\beta m}, 
$$
which implies
$$
M_\alpha b^{m(\beta - \alpha)} \le 2C.
$$
Using $M_\alpha >0$ and  $\beta-\alpha>0$, letting $m\to \infty$ gives
the required  contradiction.
\end{proof}

\end{document}